\documentclass[a4paper,11pt]{amsart}

\usepackage{amsfonts,latexsym,rawfonts,amsmath,amssymb,amsthm, mathrsfs, lscape}
\usepackage[all]{xy}
\usepackage[english]{babel}
\usepackage[utf8]{inputenc}

\usepackage{xcolor}

\usepackage{tikz-cd}

\usepackage{array, tabularx}

\usepackage{setspace}
\setstretch{1.01}

\usepackage{textcomp}

\usepackage[hypertexnames=false,
backref=page,
    pdftex,
    pdfpagemode=UseNone,
    breaklinks=true,
    extension=pdf,
    colorlinks=true,
    linkcolor=blue,
    citecolor=blue,
    urlcolor=blue,
]{hyperref}

\usepackage[top=1.5in, bottom=.9in, left=0.9in, right=0.9in]{geometry}

\newcommand{\referenza}{}

\newtheorem{thm}{Theorem}[section]
\newtheorem*{thm*}{Theorem \referenza}
\newtheorem{cor}[thm]{Corollary}
\newtheorem*{cor*}{Corollary \referenza}
\newtheorem{lem}[thm]{Lemma}
\newtheorem*{lem*}{Lemma \referenza}

\newtheorem{prop}[thm]{Proposition}
\newtheorem*{prop*}{Proposition \referenza}

\newtheorem*{conj*}{Conjecture \referenza}
\newtheorem{rmk}[thm]{Remark}
\newtheorem*{rmk*}{Remark}

\numberwithin{equation}{section}

\def \R {\mathbb R}
\def \S {\mathbb S}
\def \C {\mathbb C}

\def \P {\mathbb P}

\def \g {\mathfrak g}
\def \l {\mathfrak l}
\def \h {\mathfrak h}
\def \m {\mathfrak m}
\def \n {\mathfrak n}
\def \t {\mathfrak t}

\def \p {\partial}

\DeclareMathOperator\ad{ad}

\renewcommand{\p}{\partial}
\renewcommand{\bar}{\overline}

\allowdisplaybreaks[1]

\title[Bismut--Yamabe problem and Calabi--Yau with Torsion metrics]{On the curvature of the Bismut connection: Bismut--Yamabe problem and Calabi--Yau with Torsion metrics}

\author{Giuseppe Barbaro}

\address{Dipartimento di Matematica ``Guido Castelnuovo", Università la Sapienza, Piazzale Aldo Moro, 5, 00185 Roma, Italy} 
\email{g.barbaro@uniroma1.it}
\keywords{Gauduchon-Yamabe problem; Calabi--Yau with torsion structures; Bismut scalar curvature; Bismut Ricci curvature}
\thanks{The author is supported by project PRIN2017 ``Real and Complex Manifolds: Topology, Geometry and holomorphic dynamics'' (code 2017JZ2SW5), and by GNSAGA of INdAM}

\begin{document}

\begin{abstract}
    We study two natural problems concerning the scalar and the Ricci curvatures of the Bismut connection. 
    Firstly, we study an analog of the Yamabe problem for Hermitian manifolds related to the Bismut scalar curvature, proving that, fixed a conformal Hermitian structure on a compact complex manifold, there exists a metric with constant Bismut scalar curvature in that class when the expected constant scalar curvature is non-negative. 
    A similar result is given in the general case of Gauduchon connections. 
    We then study an Einstein-type condition for the Bismut Ricci curvature tensor on principal bundles over Hermitian manifolds with complex tori as fibers. 
    Thanks to this analysis we construct explicit examples of Calabi--Yau with torsion Hermitian structures and prove a uniqueness result for them.
\end{abstract}

\maketitle

\section{Introduction}Given a Hermitian manifold $(M,J,g)$, there are several connections which are compatible with both the metric $g$ and the complex structure $J$, meaning that they leave them parallel. Among these, the Bismut connection is the only one which has skew-symmetric torsion. Thanks to this property it takes on great interest in String Theory, see the work of Ivanov and Papadopoulos \cite{Ivanov Papa}. It also has applications in Differential Geometry, see for example \cite{Streets}, and recently, Garcia-Streets and Streets-Tian showed some interesting links with the Generalized Complex Geometry, see \cite{Garcia Streets} and \cite{Streets Tian 2}. 

In view of having a better understanding of the geometry of the Bismut connection, in this note we study the classical problems of constant scalar curvature and constant Ricci curvature (in the sense that the Ricci tensor is a multiple of the metric) with a focus on the case Ricci flat curvature. In the first part of this note, we adapt the techniques used in \cite{Chern-Yamabe} for the Chern-Yamabe problem to the general case of Gauduchon connections $\nabla^t$, which are an affine line of Hermitian connections including the Chern connection $\nabla^{Ch}$ (for $t=1$) and the Bismut connections $\nabla^+$ (for $t=-1$). In this way, in Theorems \ref{thm: zero gau-yam} $\&$ \ref{thm:  gau-yam} we could solve the \textit{Gauduchon-Yamabe} problem of finding a constant $\nabla^t$-scalar curvature metric in a given conformal class if $\Gamma_M^{t}(\{\omega\})\leq 0$ or $\Gamma_M^{t}(\{\omega\})\geq 0$ depending on $t>\frac{1}{1-n}$ or $t<\frac{1}{1-n}$, where $n$ is the complex dimension of the manifold. Here, $\Gamma_M^{t}(\{\omega\})$ is the Gauduchon degree with respect to $\nabla^t$ associated to the conformal class $\{\omega\}$, which is defined as
$$ \Gamma_M^{t}(\{\omega\}) := \int_M S^{t}(\eta) d\mu_\eta \;, $$ 
where $\eta$ is the unique volume-one Gauduchon representative of $\{\omega\}$ and $S^t(\eta)$ is the $\nabla^t$-scalar curvature associated to $\eta$. The Gauduchon degree with respect to the Chern connection corresponds to the degree of the anti-canonical line bundle $K_M^{-1}$ (studied in \cite{gauduchon-mathann}), while for $t=-1$ we get the Gauduchon degree of $\{\omega\}$ with respect to the Bismut connection, which we denote by $\Gamma^+_M(\{\omega\})$. 
Theorems \ref{thm: zero gau-yam} $\&$ \ref{thm:  gau-yam} extend the results of \cite{Chern-Yamabe} about the Chern connection and we have the following theorem as a particular case of them.
\begin{thm*}[Corollary \ref{cor: bis-yam}]
    Let $M$ be a compact complex manifold with $\dim_\C M\geq 3$ and Hermitian structure $(\omega,J)$.
    If $\Gamma_M^+(\{\omega\}) \geq 0$, then, up to scaling, there exists a unique  $\widetilde{\omega}\in\{\omega\}$ with constant Bismut scalar curvature. Moreover, its Bismut scalar curvature satisfies  $S^{+}(\widetilde{\omega})=\Gamma_M^+(\{\omega\})$.
\end{thm*}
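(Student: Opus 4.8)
The plan is to deduce this statement from the general Gauduchon--Yamabe Theorems \ref{thm: zero gau-yam} and \ref{thm: gau-yam} by specializing to the Bismut connection, which is the Gauduchon connection $\nabla^t$ corresponding to $t=-1$. The only genuine verification is that this value of $t$ lands in the regime to which those theorems apply with the stated sign of the Gauduchon degree. Writing $n=\dim_\C M$, the threshold appearing there is $\frac{1}{1-n}=-\frac{1}{n-1}$, and the inequality $t<\frac{1}{1-n}$, i.e. $-1<-\frac{1}{n-1}$, is equivalent to $n-1>1$, that is to $n\geq 3$. Thus, precisely under the hypothesis $\dim_\C M\geq 3$, the Bismut parameter sits strictly below the threshold, so the applicable regime is $t<\frac{1}{1-n}$, in which the theorems require $\Gamma_M^t(\{\omega\})\geq 0$. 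Since $\Gamma_M^{-1}=\Gamma_M^+$, this is exactly the hypothesis $\Gamma_M^+(\{\omega\})\geq 0$.

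Having matched the hypotheses, I would split according to the sign of the degree. If $\Gamma_M^+(\{\omega\})=0$, the conclusion -- existence and uniqueness up to scaling of a representative with constant (here necessarily vanishing) Bismut scalar curvature -- is the content of Theorem \ref{thm: zero gau-yam} applied at $t=-1$. If $\Gamma_M^+(\{\omega\})>0$, the same conclusion follows from Theorem \ref{thm: gau-yam} at $t=-1$. In either case one obtains a metric $\widetilde\omega\in\{\omega\}$, unique modulo homothety, whose Bismut scalar curvature $S^+(\widetilde\omega)$ is constant.

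It remains to identify this constant with the Gauduchon degree $\Gamma_M^+(\{\omega\})$. I would normalize $\widetilde\omega$ to have total volume one and integrate the conformal transformation law for $S^+$ -- the same law underlying the PDE solved in Theorems \ref{thm: zero gau-yam} and \ref{thm: gau-yam} -- against the Gauduchon volume form. The defining property of the volume-one Gauduchon representative $\eta$ (for which $\partial\bar\partial\,\eta^{\,n-1}=0$) makes the second-order part of that law integrate to zero, so that $\int_M S^+(\widetilde\omega)\,d\mu_{\widetilde\omega}=\int_M S^+(\eta)\,d\mu_\eta=\Gamma_M^+(\{\omega\})$. Since $S^+(\widetilde\omega)$ is constant and $\widetilde\omega$ has unit volume, the left-hand side equals $S^+(\widetilde\omega)$, giving $S^+(\widetilde\omega)=\Gamma_M^+(\{\omega\})$. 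If, as is likely, the general theorems already record the value of the constant, this step is immediate.

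The substantive analytic difficulty is not in the corollary but in Theorems \ref{thm: zero gau-yam} and \ref{thm: gau-yam} themselves, where a Kazdan--Warner type equation for the conformal factor is solved by a continuity method together with a priori estimates, following \cite{Chern-Yamabe}. For the corollary the only delicate point is the dimension threshold: when $n=2$ one has $\frac{1}{1-n}=-1=t$, so the Bismut case degenerates exactly onto the boundary value $t=\frac{1}{1-n}$ that is excluded from both regimes, which is why the hypothesis $\dim_\C M\geq 3$ cannot be dropped in this argument.
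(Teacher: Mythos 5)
Your proposal is correct and matches the paper's (implicit) proof exactly: Corollary \ref{cor: bis-yam} is obtained precisely by setting $t=-1$, noting $C_{-1}=1+nt-t=2-n<0$ for $n\geq 3$ (equivalently $-1<\frac{1}{1-n}$), and invoking Theorem \ref{thm: zero gau-yam} when $\Gamma_M^+(\{\omega\})=0$ and Theorem \ref{thm:  gau-yam} when $C_{-1}\Gamma_M^+(\{\omega\})<0$, i.e.\ $\Gamma_M^+(\{\omega\})>0$, with the value $S^+(\widetilde\omega)=\Gamma_M^+(\{\omega\})$ already recorded in those theorems via Proposition \ref{prop:gamma-sign}. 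One cosmetic caveat: the paper's normalization $\{\omega\}_1$ means $\int_M e^f\,d\mu_\eta=1$ rather than unit total volume (since $d\mu_{e^f\eta}=e^{nf}d\mu_\eta$), so in your integration step the correct weighted measure is $e^f\,d\mu_\eta$, not $d\mu_{\widetilde\omega}$ --- but as you observe, this step is anyway immediate from the theorems' statements.
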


The second part of the note is devoted to the study of metrics with constant Bismut Ricci curvature on the total spaces of rank one toric bundles ($T^2$-bundles) over Hermitian manifolds. 
$T^2$-fibrations over Calabi--Yau surfaces were used by Fu and Yau to show explicit solutions to the Hull--Strominger system, see \cite{Fu Yau} and \cite{Fu Yau 2}, while in \cite{Gran Gran Poon} the authors studied the so-called Calabi--Yau with torsion (CYT) condition on the total spaces of toric bundles over K{\"a}hler manifolds. 
Here, we focus on the vanishing of the Bismut Ricci tensor, describing the CYT condition for metrics that satisfy a natural ansatz on this class of manifolds. 
We then briefly analyze the Einstein-type equation $(Ric^+\omega)^{1,1}=\lambda\omega$ with $\lambda\in\R$ in the same setting. 

The CYT manifolds play a role in Physics after the works of Strominger \cite{Strominger} and Hull \cite{Hull}. Moreover, there is an interest in finding explicit examples of pluriclosed Hermitian structures (meaning that the K{\"a}hler $2$-form associated to the metric is $dd^c$-closed) which also satisfy $(Ric^+\omega)^{1,1}=\lambda\omega$ cause they are static points of the {\em pluriclosed flow} of Streets and Tian.
In Section \ref{sec: Bismut Einstein} we analyze these structures on the Calabi--Eckmann manifolds $\mathbb{S}^{2n+1}\times\mathbb{S}^{2m+1}$ (with $n,m\geq 0$). 
Indeed, the simplest examples of {\em Bismut flat} manifolds are given by the Hopf surface $\S^1\times\S^3$ and the Calabi--Eckmann threefold $\S^3\times\S^3$, and in \cite{Garcia Streets} the authors asked if it was possible to construct other special Hermitian structures on the Calabi--Eckmann manifolds of higher dimension.
The existence of CYT structures on them can be obtained by applying Theorem 3 of \cite{Gran}, while we prove a result about the uniqueness.
\begin{thm*}[Corollary \ref{cor: CYT on CE}]
    Given a Calabi--Eckmann manifold $M_{n,m}:=\mathbb{S}^{2n+1}\times\mathbb{S}^{2m+1}$, the standard Hermitian structure $(J,g)$ gives a CYT structure on it. Moreover, $g$ is the only homogeneous CYT structure on $(M_{n,m},J)$.
\end{thm*}

The above theorem comes as a special case of a more general result that has an analogous statement on Class $\mathcal{C}$ manifolds (defined in \cite{Podesta}).
These are the total spaces of homogeneous principal $T^2$-bundles over the product of two compact irreducible Hermitian symmetric spaces.
\begin{thm*}[Theorem \ref{thm: CYT class C} \& Theorem \ref{thm: CYT uniqueness}]
    Take a class $\mathcal{C}$ manifold as in \cite{Podesta}, that is a product $M_1\times M_2$ fibering over two generalized flag manifolds $X_1=G_1/H_1$ and $X_2=G_2/H_2$ with $\S^1$-fibers, and equip it with a standard complex structure. Then, there exists a CYT metric on it. 
    Moreover, if none of the $X_i$'s is $\mbox{SO}(k+2)/\mbox{SO}(2)\times\mbox{SO}(k)$ for $k\geq 3$, then this metric is the unique (up to homothety) CYT metric among the homogeneous ones.
\end{thm*}
The interest in this kind of result comes from the fact that a compact simply connected homogeneous manifold $G/H$ with an invariant complex structure $J$ (called C-space in \cite{Wang}) is K{\"a}hlerian if and only if it is a generalized flag manifold, namely when $G$ is a semisimple Lie group and $H$ is the centralizer of a torus in $G$, as stated in \cite{Borel}. 
In such case, in \cite{Matsushima} is proved that they can be endowed with a (unique) invariant K{\"a}hler--Einstein metric, while there is a general interest in finding special invariant metrics on the non-K{\"a}hler C-spaces $G/H$. 
Theorem 3 in \cite{Gran} proves the existence of a CYT structure on compact simply connected homogeneous manifolds $G/H$ with a G-invariant complex structure of vanishing first Chern class.
However, Corollary \ref{cor: lots of CYT} ensures that, in general, it is not unique.
On the other hand, the standard Hermitian structures on class $\mathcal{C}$ manifolds are the unique invariant CYT structure on them.

\section{Preliminaries and notation}\label{sec:prelim}
In all the following sections $(M,g,J)$ will be a Hermitian manifold of complex dimension $n\geq 2$, and $\omega:=g(J\cdot,\cdot)$ will denote its associated K{\"a}hler $(1,1)$-form. In particular, in local holomorphic coordinates $\{z_i\}_i$,
$$\omega= \sqrt{-1} g_{i\bar j}dz^i\wedge d\bar z^j \;,$$
where $g_{i\bar j}=g\left(\frac{\p}{\p z_i}, \frac{\p}{\p \bar z_j}\right)$. Moreover, $(g^{i\bar j})_{i,j}$ will henceforth denote the inverse of $(g_{i\bar j})_{i,j}$.

\subsection{Gauduchon connections} For $t\in\R$ the \emph{Gauduchon connections} $\nabla^{t}$ associated to $(g,J)$ are Hermitian connections on $M$ with prescribed torsion, where by Hermitian connections we mean connections which are compatible with both the metric and the complex structure, i.e. $\nabla g = \nabla J =0$. They are described with respect to the Levi--Civita connection $\nabla^{LC}$ as, 
\begin{equation*}
    g(\nabla^{t}_{x}y,z)=g(\nabla^{LC}_{x}y,z)+\frac{1-t}{4}Jd\omega(x,y,z)+\frac{1+t}{4}d\omega(Jx,y,z) \;,
\end{equation*}
where  $J$ acts as $Jd\omega(\cdot,\cdot,\cdot)=-d\omega(J\cdot,J\cdot,J\cdot)$. Since the Levi--Civita connection is torsion-free, the above formula is prescribing the torsion of these connections as 
$$ T^t(x,y,z)=\frac{1-t}{2}Jd\omega(x,y,z)+\frac{1+t}{4}\left( d\omega(Jx,y,z) + d\omega(x,Jy,z) \right) \;. $$
In particular, for $t=1$ we recover the Chern connection, while for $t=-1$ we get the Bismut connection, which thus has torsion equal to $d^c\omega$. We remark that the Bismut connection is the unique Hermitian connection with totally skew-symmetric torsion. We will henceforth indicate the Bismut connection as $\nabla^+$, and any of its curvature tensors with the superscript $+$.
The Christoffel symbols of the Gauduchon connection can be easily computed and are:
\begin{align}
    \left(\Gamma^t\right)_{ij}^{k}&=g^{k\overline{s}}\left(\frac{1+t}{2}\partial_{i}g_{j\overline{s}}+\frac{1-t}{2}\partial_{j}g_{i\overline{s}}\right) \nonumber\\
    \left(\Gamma^t\right)_{\overline{i}j}^{k}&=\frac{1-t}{2}g^{k\overline{s}}\left(\overline{\partial}_{i}g_{j\overline{s}}-\overline{\partial}_{s}g_{j\overline{i}}\right)\\
    \left(\Gamma^t\right)_{i\overline{j}}^{k}&=0\nonumber
\end{align}
At a fixed point $p\in M$ we can choose special holomorphic coordinates $\{z_i\}$ such that $g_{i\bar j}(p)=\delta_{ij}$ and the Christoffel symbols of the Levi--Civita connection vanish at $p$, i.e. $(\Gamma^{LC})_{ij}^k(p)=0$. With these coordinates, we compute the curvature tensors of the Gauduchon connections as
\begin{align}\label{eq: Gauduchon curature tensor}
    R^t_{i\bar j k \bar l}(g)&= -\delta_{p\bar l} \left(\frac{\p
    }{\p \bar z^j}\left(\Gamma^t\right)^{p}_{ik} - \frac{\p }{\p z^i}\left(\Gamma^t\right)^{p}_{\bar jk} + \left(\Gamma^t\right)_{ik}^{s}\left(\Gamma^t\right)^{p}_{\bar js} - \left(\Gamma^t\right)_{ \bar jk}^{s}\left(\Gamma^t\right)^{p}_{is} \right) \nonumber\\
    &=\frac{1-t}{2}\left(\frac{\p^2 g_{k\bar l}}{\p z_i \p\bar z_j} - \frac{\p^2 g_{k\bar j}}{\p z_i \p\bar z_l} - \frac{\p^2 g_{i\bar l}}{\p z_k \p\bar z_j} \right) - \frac{1+t}{2} \frac{\p^2 g_{k\bar l}}{\p z_i \p\bar z_j} + \sum_q (1-t)^2\frac{\p g_{q\bar l}}{\p z_i}\frac{\p g_{k\bar j}}{\p\bar z_q} - t^2 \frac{\p g_{i\bar q}}{\p z_k}\frac{\p g_{q\bar l}}{\p\bar z_j}
\end{align}
We define the Ricci tensor associated to the Gauduchon connection $\nabla^t$ as the contraction of the endomorphism part of its curvature tensor; hence, in local coordinates,
$$ Ric^t_{\cdot \cdot}(g) = g^{k\bar l} R^t_{\cdot \cdot k\bar l}(g) \;.$$
Contracting again we obtain the $\nabla^t$-scalar curvature 
$$ S^t(g)=g^{i\bar j}Ric^t_{i\bar j}(g) \;.$$

From a direct computation using \eqref{eq: Gauduchon curature tensor}, we obtain the following useful formula, which is well known for the Bismut connection. 
\begin{prop}\label{prop: relation Bismu Chern Ricci}
     Let $(M,g,J)$ be a Hermitian manifold of complex dimension $n$, and let $\omega$ be its associated K{\"a}hler $(1,1)$-form. For $t\in\R$, the Ricci curvature form of the Gauduchon connection $\nabla^{t}$ associated to $(g,J)$ is given by the formula: 
     $$ Ric^t(g)=\frac{t-1}{2}dd_g^*\omega - \sqrt{-1} \partial\overline{\partial} \log\omega^n \;.$$
     In particular, the (1,1)-component of the Gauduchon Ricci curvature form satisfies
     \begin{equation}\label{eq: ricci t and chern}
         (Ric^t(g))^{1,1}=\frac{t-1}{2}(\partial\partial^*_g \omega +\overline{\partial\partial}^*_g \omega) - \sqrt{-1} \partial\overline{\partial} \log\omega^n \;.
     \end{equation}
\end{prop}
In the above proposition, $d^*_g=\p^*_g+\bar\p^*_g$ where $\partial^*_g:\land^{p+1,q}M\rightarrow\land^{p,q}M$ and $\overline{\partial}^*_g:\land^{p,q+1}M\rightarrow\land^{p,q}M$ are the $L^2_g$-adjoint operators of $\partial$ and $\overline{\partial}$ respectively.
We recall the local formulas for these operators, which can be found, for example, in \cite{Streets Tian 3}. In local holomorphic coordinates, 
\begin{align*}
    \left(\p^*_{g} \omega \right)_{\bar k} =& \sqrt{-1} g^{p \bar q}
    \left(\p_{\bar q} g_{p \bar k} - \p_{\bar k} g_{p \bar q} \right)\\
    \left( \bar\p _g^* \omega \right)_j =& \sqrt{-1} g^{p \bar q} \left( \p_p g_{j \bar q} - \p_j g_{p \bar q} \right)
\end{align*}
We also fix here the notation for the trace of a two form $\alpha$ with respect to $\omega$: $$tr_\omega \alpha := \sqrt{-1}g^{i\overline{j}}\alpha_{i\overline{j}}\;,$$ where we used local holomorphic coordinates $\{z_i\}_i$.\\
Thanks to Proposition \ref{prop: relation Bismu Chern Ricci} we can describe the case where two Gauduchon scalar curvatures with different Gauduchon parameters coincide.
\begin{prop}\label{prop: S1=S2 - balanced}
    Let $(M,g,J)$ be a compact Hermitian manifold and take two Gauduchon parameters $t_1\neq t_2$. Then the following conditions are equivalent: 
    \begin{enumerate}
        \item[i.] $Ric^{t_1}(\omega)=Ric^{t_2}(\omega)$;
        \item[ii.] $S^{t_1}(\omega)=S^{t_2}(\omega)$;
        \item[iii.] $g$ is balanced.
    \end{enumerate}
\end{prop}
\begin{proof}
    Obviously, $(i)\Rightarrow (ii)$. 
    As for $(ii)\Rightarrow(iii)$, taking the trace in~\eqref{eq: ricci t and chern} we have that $s^{t_1}=s^{t_2}$ if and only if 
    $$ tr_\omega (\p\p^*_g \omega + \bar{\p\p}^*_g \omega)=0 \;. $$
    However, integrating over $M$ we get that
    $$ \int_M tr_\omega (\p\p^*_g \omega) = (\p\p^*_g \omega,\omega)_g = (\p^*_g \omega,\p^*_g \omega)_g = |\p^*_g \omega |^2_g \;,$$
    and similarly for $tr_\omega (\bar{\p\p}^*_g \omega)$.
    Thus both $\p^*_g \omega$ and $\bar{\p}^*_g \omega$ vanish, which means that $\theta=Jd^*\omega=0$ and $g$ is balanced. 
    Finally, going backward through this argument yields $(iii)\Rightarrow (i)$.
\end{proof}

\medskip

We compute the variation of the $\nabla^{t}$-Ricci curvature under a conformal change, which easily comes from the above formulas:
\begin{equation}\label{eq: conformal Ricci Gauduchon}
    (Ric^t(e^f g))^{1,1}=(Ric^t(g))^{1,1} + (t-nt-1)\sqrt{-1}\partial\overline{\partial}f\;,
\end{equation}
while the $(2,0)$ and the $(0,2)$ components do not change.
Taking the trace, we obtain 
$$ S^t(e^f g) = e^{-f} \left(S^t(g) + \left(1+nt-t \right)\sqrt{-1}tr_\omega\partial\overline{\partial} f\right) \;.$$
    
\subsection{Chern Laplacian}
We recall the definition of the Chern Laplacian $\Delta^{Ch}_{\omega}$ associated to the Hermitian metric $\omega$ on a smooth function $f$ as
$$ \Delta^{Ch}_\omega f = 2\sqrt{-1}\mathrm{tr}_\omega\overline\partial\partial f \;, $$
or, in local holomorphic coordinates $\{z_i\}_i$ as
$$ \Delta^{Ch}_\omega \stackrel{\text{loc}}{=} -2g^{i\bar j}\partial_i \partial_{\bar j} \;.$$ 

\begin{rmk}
    With this notation, the variation formula for the $\nabla^{t}$-scalar curvature under a conformal change becomes:
    \begin{equation}\label{Eq: Scalar_curvature_conformal_change}
         S^t(e^f g) = e^{-f} \left(S^t(g) + \frac{1}{2}(1+nt-t)\Delta^{Ch}_g f\right) \;.
    \end{equation}
\end{rmk}

In \cite{gauduchon-mathann}, Gauduchon made explicit the relation between the Hodge--de Rham Laplacian $\Delta_{d,\,\omega}$ and the Chern Laplacian $\Delta^{Ch}_\omega$ on smooth functions through the torsion $1$-form, which we recall is defined by the equation
$$ d\omega^{n-1} = \theta\wedge\omega^{n-1} \;.$$

\begin{lem}[\cite{gauduchon-mathann}, pages 502-503]\label{lem:chern-laplacian}
 Let $M$ be a compact complex manifold endowed with a Hermitian metric $\omega$ with torsion 1-form $\theta$. The Chern Laplacian on smooth functions $f$ has the form
 $$ \Delta^{Ch}_\omega f = \Delta_d f + (df,\, \theta )_\omega \;. $$
\end{lem}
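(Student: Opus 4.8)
The plan is to reduce both Laplacians to wedge products against $\tfrac{\omega^{n-1}}{(n-1)!}$ and to let the defining relation $d\omega^{n-1}=\theta\wedge\omega^{n-1}$ produce the torsion term through a single integration by parts. Throughout I would write $d^{c}=\sqrt{-1}(\bar\partial-\partial)$, so that $dd^{c}f=2\sqrt{-1}\,\partial\bar\partial f$ is a real $2$-form for real $f$, and I would denote by $dV_{g}=\tfrac{\omega^{n}}{n!}$ the Riemannian volume form. Combining the definition $\Delta^{Ch}_\omega f=2\sqrt{-1}\,\mathrm{tr}_\omega\bar\partial\partial f$ with the standard identity expressing the trace of a $(1,1)$-form as its product with $\tfrac{\omega^{n-1}}{(n-1)!}$, one gets the pointwise reformulation
\[
\Delta^{Ch}_\omega f\,dV_{g}=-\,dd^{c}f\wedge\frac{\omega^{n-1}}{(n-1)!}\;.
\]

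Next I would record the two elementary pointwise identities that I need, both verified in a unitary coframe at a point. Since $d^{c}f$ is, pointwise, an algebraic (complex-structure) rotation of $df$, the basic fact is that for any two real $1$-forms $\alpha,\beta$ one has $\alpha\wedge d^{c}(\beta)\wedge\tfrac{\omega^{n-1}}{(n-1)!}=(\alpha,\beta)_\omega\,dV_{g}$, where $d^{c}(\beta)$ means the same pointwise rotation applied to $\beta$. Taking $\alpha=dh$ and $\beta=f$ gives $dh\wedge d^{c}f\wedge\tfrac{\omega^{n-1}}{(n-1)!}=(dh,df)_\omega\,dV_{g}$, and taking $\alpha=\theta$, $\beta=f$ and reordering the wedge gives $d^{c}f\wedge\theta\wedge\tfrac{\omega^{n-1}}{(n-1)!}=-(df,\theta)_\omega\,dV_{g}$.

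The torsion then enters through a single integration by parts. For an arbitrary test function $h$ I would start from $\int_M h\,\Delta^{Ch}_\omega f\,dV_g=-\int_M h\,dd^{c}f\wedge\tfrac{\omega^{n-1}}{(n-1)!}$ and expand $d\!\left(h\,d^{c}f\wedge\tfrac{\omega^{n-1}}{(n-1)!}\right)$, whose integral vanishes by Stokes on the compact $M$. Using $d\tfrac{\omega^{n-1}}{(n-1)!}=\theta\wedge\tfrac{\omega^{n-1}}{(n-1)!}$ this yields
\[
\int_M h\,dd^{c}f\wedge\frac{\omega^{n-1}}{(n-1)!}=-\int_M dh\wedge d^{c}f\wedge\frac{\omega^{n-1}}{(n-1)!}+\int_M h\,d^{c}f\wedge\theta\wedge\frac{\omega^{n-1}}{(n-1)!}\;.
\]
The first term on the right equals $-\int_M (dh,df)_\omega\,dV_g=-\int_M h\,\Delta_d f\,dV_g$, since $\Delta_d f=d^{*}df$ is characterised by $\int_M(dh,df)_\omega\,dV_g=\int_M h\,\Delta_d f\,dV_g$; the second, by the reordering identity above, equals $-\int_M h\,(df,\theta)_\omega\,dV_g$. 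Collecting signs gives $\int_M h\,\Delta^{Ch}_\omega f\,dV_g=\int_M h\bigl(\Delta_d f+(df,\theta)_\omega\bigr)\,dV_g$ for every $h$, whence the pointwise identity.

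The main obstacle is bookkeeping rather than conceptual: one must fix the sign and normalisation conventions consistently (the factor in $dd^{c}=2\sqrt{-1}\,\partial\bar\partial$, the orientation giving $dV_g=\omega^n/n!$, and the precise form of the two algebraic identities) so that the torsion term emerges with coefficient exactly $+1$ and in the pairing $(df,\theta)_\omega$ rather than its negative. A purely local alternative—comparing $\Delta^{Ch}_\omega f=-2g^{i\bar j}\partial_i\partial_{\bar j}f$ with the Laplace–Beltrami operator written in complex coordinates and reading off the first-order difference—also works, but it obscures the geometric origin of the extra term, which the global argument above makes manifest through $d\omega^{n-1}=\theta\wedge\omega^{n-1}$.
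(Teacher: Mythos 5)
Your proof is correct, and it is essentially the same argument as the one the paper relies on: the paper states Lemma \ref{lem:chern-laplacian} without proof, citing Gauduchon (pages 502--503), and Gauduchon's derivation is precisely this global wedge-product computation in which the torsion term is produced by $d\omega^{n-1}=\theta\wedge\omega^{n-1}$ under one integration by parts, with the weak identity against arbitrary $h$ upgrading to the pointwise one. The only blemishes are notational: twice you write ``$\beta=f$'' where you clearly mean $\beta=df$, and it would be worth stating explicitly that your rotation convention is $d^{c}f=-df\circ J$, since with the paper's normalizations ($\Delta^{Ch}_\omega f=2\sqrt{-1}\,\mathrm{tr}_\omega\overline{\partial}\partial f$, $dV_g=\omega^n/n!$) that is exactly what makes the torsion term come out as $+(df,\theta)_\omega$.
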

In particular, the Chern Laplacian is a differential elliptic operator of 2nd
order without terms of order 0 and its index agrees with the index of the Hodge--de Rham Laplacian, as is outlined in \cite{gauduchon-cras1977}. Moreover, the Chern Laplacian and the Hodge--de Rham Laplacian on smooth functions coincide when $\omega$ is balanced (i.e. if $\theta=0$), and $ \Delta^{Ch}_\omega f_{|_{p}} \geq 0$ whenever $f$ is a smooth real function on $M$ which attains a local maximum at $p\in M$.

\subsection{Hermitian conformal structures}
Given a metric $\omega$ on $M$, the Hermitian conformal class of $\omega$ will be denoted by
$$ \{\omega\} := \left\{ \exp({f})\,\omega \;\middle|\; f\in\mathcal{C}^\infty(M;\R) \right\} \;. $$
The following fundamental result by Gauduchon ensures the existence of a Gauduchon metric (i.e. a metric with $d^*\theta=0$) in any Hermitian conformal class.
\begin{thm}[\cite{gauduchon-cras1977}, Théorème 1]
 Let $M$ be a compact complex manifold of complex dimension $\dim_\C M\geq2$, and fix a Hermitian conformal structure $\{\omega\}$.
 Then there exists a unique Gauduchon metric $\eta$ in $\{\omega\}$ such that $\int_M d\mu_\eta=1$.
\end{thm}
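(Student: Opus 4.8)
The plan is to translate the Gauduchon condition into a single linear elliptic equation and then solve it using the theory of second-order elliptic operators on the compact manifold $M$. First I would recall that a representative $\tilde\omega$ is Gauduchon, i.e. $d^*\theta=0$, precisely when $\sqrt{-1}\,\partial\overline\partial\,\tilde\omega^{n-1}=0$. Writing a general element of $\{\omega\}$ as $\tilde\omega=e^{f}\omega$ and setting $u:=e^{(n-1)f}>0$, one has $\tilde\omega^{n-1}=u\,\omega^{n-1}$, so the Gauduchon condition becomes the \emph{linear} equation $\sqrt{-1}\,\partial\overline\partial(u\,\omega^{n-1})=0$ in the unknown positive function $u$. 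Conversely every positive $u$ gives a legitimate conformal factor $f=\tfrac{1}{n-1}\log u$, so it suffices to produce a strictly positive $u$ in the kernel of the operator $P$ defined by $\sqrt{-1}\,\partial\overline\partial(u\,\omega^{n-1})=:P(u)\,\tfrac{\omega^n}{n!}$.

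Next I would identify $P$ as a nonzero multiple of the formal adjoint of the Chern Laplacian. Integrating by parts twice via Stokes' theorem gives, for all smooth $f,u$,
\begin{equation*}
\int_M f\cdot\sqrt{-1}\,\partial\overline\partial(u\,\omega^{n-1})=\int_M u\cdot\sqrt{-1}\,\partial\overline\partial f\wedge\omega^{n-1},
\end{equation*}
and since $\sqrt{-1}\,\partial\overline\partial f\wedge\omega^{n-1}$ equals a fixed positive constant times $(\Delta^{Ch}_\omega f)\,\tfrac{\omega^n}{n!}$ (this is just the definition of $\mathrm{tr}_\omega$), we conclude that $P^*$ coincides, up to that positive constant, with $\Delta^{Ch}_\omega$. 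In particular $P$ is a second-order elliptic operator with no zeroth-order term, and $\int_M P(u)\,\tfrac{\omega^n}{n!}=0$ for every $u$, consistently with Stokes.

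Then I would compute $\dim\ker P$. By the maximum-principle property of $\Delta^{Ch}_\omega$ recorded in the excerpt, any $f\in\ker\Delta^{Ch}_\omega$ attains its maximum with $\Delta^{Ch}_\omega f\ge0$ and its minimum with $\Delta^{Ch}_\omega f\le0$, which by the strong maximum principle forces $f$ to be constant; hence $\dim\ker\Delta^{Ch}_\omega=1$. Moreover, by Lemma~\ref{lem:chern-laplacian} the operator $\Delta^{Ch}_\omega=\Delta_{d}+(d\,\cdot\,,\theta)_\omega$ differs from the Hodge--de Rham Laplacian only by a first-order term, so it has the same (vanishing) Fredholm index as $\Delta_d$. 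Since $\mathrm{ind}\,P=-\mathrm{ind}\,\Delta^{Ch}_\omega=0$, this gives $\dim\ker P=\dim\mathrm{coker}\,P=\dim\ker\Delta^{Ch}_\omega=1$, so the solution, once it exists, is unique up to scaling.

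Finally, existence reduces to showing that the one-dimensional space $\ker P$ is spanned by a strictly positive function. Here I would invoke the Krein--Rutman theorem: the operator $\Delta^{Ch}_\omega$ generates a positivity-preserving semigroup (again by its maximum principle), and $P$, being its adjoint, shares the same principal eigenvalue; for $\Delta^{Ch}_\omega$ this principal eigenvalue is $0$, realized by the positive eigenfunction $1$, so the corresponding principal eigenfunction of $P$ can be chosen strictly positive and is simple. This yields $u>0$ with $P(u)=0$, hence a Gauduchon metric $\eta=u^{1/(n-1)}\omega$, unique up to positive scaling by simplicity; the normalization $\int_M d\mu_\eta=1$ then pins down $\eta$ uniquely. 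I expect the positivity of the generator of $\ker P$ to be the main obstacle: the ellipticity, Fredholm, and index inputs are standard, whereas the correct \emph{sign} of the kernel element genuinely requires the maximum principle / Krein--Rutman machinery.
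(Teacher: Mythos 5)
This statement is quoted by the paper without proof---it is imported verbatim from Gauduchon's note \cite{gauduchon-cras1977}---so the relevant comparison is with Gauduchon's original argument, and your proposal reconstructs it faithfully: the substitution $u=e^{(n-1)f}$ linearizing the Gauduchon condition into $\sqrt{-1}\,\partial\bar\partial(u\,\omega^{n-1})=0$, the identification of the resulting operator $P$ as (a multiple of) the formal adjoint of the Chern Laplacian, the index-zero and one-dimensional-kernel count via the maximum principle, and the positivity of the generator of $\ker P$ via Krein--Rutman are exactly the classical steps, and you correctly isolate the positivity of the kernel element as the genuinely hard point. Two small slips, neither of which damages the argument. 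First, with the paper's convention $\Delta^{Ch}_\omega f\stackrel{\text{loc}}{=}-2g^{i\bar j}\partial_i\partial_{\bar j}f$ (normalized so that $\Delta^{Ch}_\omega f\geq 0$ at a local maximum), the constant relating $\sqrt{-1}\,\partial\bar\partial f\wedge\omega^{n-1}$ to $(\Delta^{Ch}_\omega f)\,\omega^n/n!$ is negative, not positive; this is immaterial since only kernels and cokernels are at stake. Second, your claim that $P$ has no zeroth-order term is false in general: expanding $\sqrt{-1}\,\partial\bar\partial(u\,\omega^{n-1})$ produces the term $u\,\sqrt{-1}\,\partial\bar\partial\omega^{n-1}$, which vanishes precisely when $\omega$ is already Gauduchon (correspondingly, the paper's formula $(\Delta^{Ch}_\eta)^*u=\Delta_d u-(du,\theta)_\eta$ is stated only for Gauduchon $\eta$; in general the adjoint carries the zeroth-order term $(d^*\theta)u$). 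It is $\Delta^{Ch}_\omega$ itself that has no zeroth-order term, and the property you actually use---$P^*(1)=\Delta^{Ch}_\omega(1)=0$, equivalently $\int_M P(u)\,\omega^n/n!=0$---survives, so the Fredholm and Krein--Rutman steps go through unchanged. Finally, note that passing back from $u>0$ to the metric via $\eta=u^{1/(n-1)}\omega$ is where the hypothesis $\dim_\C M\geq 2$ enters, and your closing observation that simplicity of the kernel gives uniqueness up to homothety, pinned down by the volume-one normalization, completes the uniqueness claim correctly.
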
 
Using this result we can consider the following normalized conformal class
$$ \{\omega\}_1 := \left\{ \exp(f)\eta\in\{\omega\} \;\middle|\; \int_M \exp(f)d\mu_\eta=1 \right\} \subset \{\omega\} \;, $$
where we denoted by $\eta\in\{\omega\}$ the unique Gauduchon representative of volume $1$.\\
With this choice of $\eta$ we can also introduce a natural \emph{invariant} of the conformal class $\{\omega\}$, namely, the \emph{Gauduchon degree} 
$$ \Gamma_M (\{\omega\}) \in \R \;, $$
defined as 
$$ \Gamma_M(\{\omega\}) := \frac{1}{(n-1)!} \int_M c_1^{BC}(K_M^{-1})\wedge\eta^{n-1} = \int_M S^{Ch}(\eta) d\mu_\eta \;. $$
We extend it to any Gauduchon parameter $t\in\R$, 
$$ \Gamma_M^t(\{\omega\}) := \int_M S^{t}(\eta) d\mu_\eta\;. $$
This value is related to the expected constant $\nabla^{t}$-scalar curvature as follows.
\begin{prop}\label{prop:gamma-sign}
    Let $(M,g,J)$ be a compact Hermitian manifold. Assume that $\omega'\in\{\omega\}$ has constant $\nabla^{t}$-scalar curvature equal to $\lambda\in\R$. Then $\omega'\in\{\omega\}_1$ if and only if
    $$ \Gamma_M^t(\{\omega\}) = \lambda \;. $$
    In particular, the sign of the $\nabla^{t}$-scalar curvature of a potential constant $\nabla^{t}$-scalar curvature metric in $\{\omega\}$ agrees with the sign of $\Gamma_M^t(\{\omega\})$.
\end{prop}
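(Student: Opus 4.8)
The plan is to show the two implications of the equivalence, and then deduce the sign statement as an immediate corollary. Throughout I work with the normalized Gauduchon representative $\eta \in \{\omega\}$ of volume one, and I write any metric in the conformal class as $\omega' = e^f \eta$ for some $f \in \mathcal{C}^\infty(M;\R)$. The starting point is the conformal variation formula \eqref{Eq: Scalar_curvature_conformal_change}, which gives
$$ S^t(\omega') = e^{-f}\left(S^t(\eta) + \tfrac{1}{2}(1+nt-t)\Delta^{Ch}_\eta f\right)\;. $$
The defining hypothesis is that the left-hand side equals the constant $\lambda$, so multiplying through by $e^f$ yields the pointwise identity $\lambda\, e^f = S^t(\eta) + \tfrac{1}{2}(1+nt-t)\Delta^{Ch}_\eta f$ on all of $M$.

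The key step is to integrate this identity against the volume form $d\mu_\eta$ of the Gauduchon metric $\eta$. The crucial observation is that the Chern Laplacian term integrates to zero: since $\eta$ is Gauduchon, its torsion one-form satisfies $d^*\theta = 0$, and by Lemma \ref{lem:chern-laplacian} we have $\Delta^{Ch}_\eta f = \Delta_d f + (df,\theta)_\eta$. Integrating the Hodge-de Rham term gives zero by Stokes, while integrating the pairing $(df,\theta)_\eta$ and using $d^*\theta=0$ (i.e. integrating by parts to move the adjoint onto $\theta$) also gives zero. Hence $\int_M \Delta^{Ch}_\eta f\, d\mu_\eta = 0$. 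Integrating the identity therefore produces
$$ \lambda \int_M e^f\, d\mu_\eta = \int_M S^t(\eta)\, d\mu_\eta = \Gamma^t_M(\{\omega\})\;, $$
where the right-hand equality is just the definition of the Gauduchon degree.

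From here both implications follow directly. If $\omega' \in \{\omega\}_1$, then by definition $\int_M e^f\, d\mu_\eta = 1$, so the displayed equation reads $\lambda = \Gamma^t_M(\{\omega\})$. Conversely, if $\lambda = \Gamma^t_M(\{\omega\})$, then the equation becomes $\lambda\int_M e^f\, d\mu_\eta = \lambda$; provided $\lambda \neq 0$ this forces $\int_M e^f\, d\mu_\eta = 1$, which is exactly the condition $\omega' \in \{\omega\}_1$. The case $\lambda = 0$ requires a separate remark, since then the equation is vacuous; here one should note that a constant-scalar-curvature metric in the class can always be rescaled by a positive constant to have the correct total volume, and this rescaling does not change the (zero) scalar curvature, so a representative in $\{\omega\}_1$ still exists. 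Finally, the sign statement is immediate: since $\int_M e^f\, d\mu_\eta > 0$ always, the identity $\lambda\int_M e^f\, d\mu_\eta = \Gamma^t_M(\{\omega\})$ shows that $\lambda$ and $\Gamma^t_M(\{\omega\})$ have the same sign for any constant-scalar-curvature representative, independent of normalization.

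The main obstacle I anticipate is the integration-by-parts argument showing $\int_M \Delta^{Ch}_\eta f\, d\mu_\eta = 0$: one must be careful that it is precisely the Gauduchon condition $d^*\theta = 0$ that makes the torsion term vanish, and that the relevant $L^2$ pairing is taken with respect to $\eta$. This is exactly the point where the choice of the Gauduchon representative (rather than an arbitrary metric in the class) is essential.
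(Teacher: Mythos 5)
Your proposal is correct and, in its core, follows the same route as the paper: both proofs hinge on integrating the conformal change formula \eqref{Eq: Scalar_curvature_conformal_change} against $d\mu_\eta$ for the volume-one Gauduchon representative $\eta$, and on the vanishing $\int_M \Delta^{Ch}_\eta f\, d\mu_\eta = 0$, obtained exactly as you do via Lemma \ref{lem:chern-laplacian} and $d^*\theta = 0$; this gives the identity $\lambda \int_M e^f\, d\mu_\eta = \Gamma^t_M(\{\omega\})$, from which the forward implication is immediate in both treatments. Where you diverge is the converse: the paper rescales $e^f\omega$ by $e^c$ with $e^{-c} = \int_M e^f\, d\mu_\eta$ so that the rescaled metric lies in $\{\omega\}_1$, applies the forward implication to it to get $e^{-c}\,\Gamma^t_M(\{\omega\}) = \Gamma^t_M(\{\omega\})$, and concludes $c = 0$; you instead read $\int_M e^f\, d\mu_\eta = 1$ directly off the integral identity. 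These are equivalent in substance, and both require $\lambda \neq 0$: the paper's step ``thus $c=0$'' fails when $\Gamma^t_M(\{\omega\}) = 0$ for the same reason your equation becomes vacuous. You are right --- and more careful than the paper's own proof, which glosses over this --- to single out the case $\lambda = 0$: there the ``if'' direction is genuinely false as stated, since any constant rescaling $e^c\omega'$ of a zero-scalar-curvature metric $\omega' \in \{\omega\}_1$ still has constant scalar curvature $0 = \Gamma^t_M(\{\omega\})$ while leaving $\{\omega\}_1$, so only the existence of a normalized representative (your remark) survives. Your derivation of the sign statement from $\int_M e^f\, d\mu_\eta > 0$ is also exactly what the ``in particular'' clause intends.
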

\begin{proof}
 Suppose that $e^f\omega\in\{\omega\}_1$ has constant $\nabla^{t}$-scalar curvature $\lambda$. As representative in $\{\omega\}$, fix the unique Gauduchon metric $\eta\in\{\omega\}$ of volume $1$ and denote by $\theta$ its torsion 1-form. Equation \eqref{Eq: Scalar_curvature_conformal_change} yields
 $$ \frac{1}{2}(1+nt-t)\int_M \Delta^{Ch}_\eta f d\mu_\eta + \int _M S^{t}(\eta) d\mu_\eta = \lambda \int_M \exp(f) d\mu_\eta \;, $$
 where
 $$ \int_M \Delta^{Ch}_\eta f d\mu_\eta = \int_M \Delta_d f d\mu_\eta + \int_M (df, \theta) d\mu_\eta = \int_M \Delta_d f d\mu_\eta + \int_M (f, d^*\theta) d\mu_\eta = 0 \;, $$
 since $\eta$ is Gauduchon.
 Therefore
 $$ \Gamma_M^t(\{\omega\}) = \int_M S^{t}(\eta) d\mu_\eta = \lambda \int_M \exp(f) d\mu_\eta = \lambda \;, $$
 yielding the first implication.
 
 \medskip
 
 On the other hand, if we have a metric $e^f\omega\in\{\omega\}$ with constant $\nabla^{t}$-scalar curvature equal to $\Gamma_M^t(\{\omega\})$ we can scale it by a constant $e^c$ so that $e^{f+c}\,\omega$ stays in the normalized conformal class $\{\omega\}_1$ and its Gauduchon scalar curvature becomes $e^{-c}\,\Gamma^t_M(\{\omega\})$. Here $c$ is such that 
 $$ e^{-c}=\int_M e^f d\mu_\eta \;.$$
 Note that $e^{f+c}\,\omega$ is a constant $\nabla^{t}$-scalar curvature metric in $\{\omega\}_1$, hence it has scalar curvature equal to $\Gamma^t_M(\{\omega\})$. Thus, finally, $c=0$. 
\end{proof}

\section{Gauduchon-Yamabe problem}\label{sec:cy}
The Yamabe problem, consisting in finding a constant scalar curvature metric in the conformal class of a given Riemannian metric, is well understood in the Riemannian setting, while in the Hermitian setting, the Chern-Yamabe problem was introduced and studied in \cite{Chern-Yamabe}. In this note we study it for all the Gauduchon connections, i.e. given a Hermitian manifold $(M,g,J)$ we look for a constant $\nabla^t$-scalar curvature metric $\widetilde{\omega}$ in the conformal class $\{\omega \}$. Thanks to the conformal changing equation for the scalar curvature of $\nabla^t$ (\ref{Eq: Scalar_curvature_conformal_change}), this problem reduces to solve a semi-linear elliptic equation of $2$nd order:
\begin{equation}\label{eq: elliptic_equation_G-Y_problem}
    C_t \Delta^{Ch}_\omega f + S^t(\omega) = \lambda\exp(2f) \;,
\end{equation} where $C_t=1+nt-t$ and $\lambda$ is the expected constant scalar curvature value, equal to $\Gamma^t_M(\{\omega\})$ by Proposition \ref{prop:gamma-sign}.\\
Note that for $t=1$ we recover the Chern-Yamabe problem which has been studied in \cite{Chern-Yamabe} by Angella, Calamai and Spotti. In what follows, we extend their arguments to the more general Gauduchon-Yamabe problem, obtaining their results on the Chern connection and our results on the Bismut connection as particular choices of Gauduchon connections in Theorems \ref{thm: zero gau-yam} $\&$ \ref{thm:  gau-yam}. 

\subsection{Linear case}\label{sec:zero-gaud}
In case of $\Gamma^t_M(\{\omega\})=0$, the semi-linear elliptic differential equation \eqref{eq: elliptic_equation_G-Y_problem} becomes just linear since we shall take $\lambda=0$, and so we get a solution for the corresponding Gauduchon-Yamabe problem whenever $C_t\neq 0$.

\begin{thm}\label{thm: zero gau-yam}
 Let $M$ be a compact complex manifold with Hermitian structure $(\omega,J)$. If the Gauduchon parameter $t$ is such that $C_t\neq 0$ and $\Gamma^t_M(\{\omega\}) = 0$, then there exists a unique metric $\widetilde{\omega}\in\{\omega\}_1$ such that it has constant scalar curvature with respect to the $\nabla^t$ Gauduchon connection. Moreover, $S^t(\widetilde{\omega})=\Gamma^t_M(\{\omega\})=0$.
\end{thm}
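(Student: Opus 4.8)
The plan is to turn the statement into the solvability of a single linear elliptic equation and to read off the solvability condition from the invariant $\Gamma^t_M(\{\omega\})$ via the Fredholm alternative. First I would fix as background metric the volume-one Gauduchon representative $\eta\in\{\omega\}$, in terms of which $\{\omega\}_1$ is defined, and look for $\widetilde\omega=e^{2f}\eta$ of constant $\nabla^t$-scalar curvature. By Proposition \ref{prop:gamma-sign} the only admissible value of that constant is $\lambda=\Gamma^t_M(\{\omega\})=0$, so the semilinear equation \eqref{eq: elliptic_equation_G-Y_problem} degenerates to the \emph{linear} equation
\[
 C_t\,\Delta^{Ch}_\eta f=-S^t(\eta),
\]
where the hypothesis $C_t\neq 0$ permits dividing by $C_t$. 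Thus the whole problem reduces to solving a linear second-order elliptic PDE for $f$.

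Next I would analyse $\Delta^{Ch}_\eta$ as a Fredholm operator. It is elliptic of second order with no zeroth-order term, and by the discussion following Lemma \ref{lem:chern-laplacian} its index equals that of the Hodge--de Rham Laplacian, hence $0$; moreover the maximum-principle property recalled there forces $\ker\Delta^{Ch}_\eta$ to consist of the constants alone, so the cokernel is also one-dimensional. To identify that cokernel explicitly I would reuse the computation from the proof of Proposition \ref{prop:gamma-sign}: since $\eta$ is Gauduchon, $\int_M\Delta^{Ch}_\eta f\,d\mu_\eta=0$ for every $f$. Hence the image of $\Delta^{Ch}_\eta$ is contained in the codimension-one subspace $\{h:\int_M h\,d\mu_\eta=0\}$, and being itself codimension one it must coincide with it.

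It then follows that the right-hand side $-\tfrac{1}{C_t}S^t(\eta)$ lies in the image of $\Delta^{Ch}_\eta$ precisely when $\int_M S^t(\eta)\,d\mu_\eta=\Gamma^t_M(\{\omega\})=0$, which is exactly our assumption; elliptic regularity then yields a smooth solution $f$. This solution is unique up to an additive constant (the kernel), and I would fix that constant by the volume normalization so that $\widetilde\omega=e^{2f}\eta$ lands in $\{\omega\}_1$, giving uniqueness. By construction $S^t(\widetilde\omega)\equiv 0=\Gamma^t_M(\{\omega\})$, as claimed.

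The main obstacle is the correct identification of the cokernel of the \emph{non-self-adjoint} operator $\Delta^{Ch}_\eta$ and the matching of its solvability condition with $\Gamma^t_M(\{\omega\})$. This is exactly where the Gauduchon gauge $d^*\theta=0$ is indispensable: without it the integral $\int_M\Delta^{Ch}_\eta f\,d\mu_\eta$ need not vanish, and the clean duality between the cokernel and the degree $\Gamma^t_M$ would break down. The remaining ingredients---the conformal reduction and the elliptic regularity---are routine.
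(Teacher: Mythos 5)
Your proposal is correct and follows essentially the same route as the paper: reduce to the linear equation $C_t\,\Delta^{Ch}_\eta f=-S^{t}(\eta)$ at the volume-one Gauduchon representative, solve it by the Fredholm alternative, and use the Gauduchon gauge to get $\int_M\Delta^{Ch}_\eta f\,d\mu_\eta=0$, which matches the solvability condition with $\Gamma^t_M(\{\omega\})=0$. The only difference is cosmetic: you identify the cokernel by an index-zero dimension count (with $\ker\Delta^{Ch}_\eta$ equal to the constants via the strong maximum principle), whereas the paper computes the formal adjoint $(\Delta^{Ch}_\eta)^*u=\Delta_d u-(du,\theta)_\eta$ explicitly and shows its kernel consists of constants by the same integration-by-parts you invoke --- two implementations of the same Fredholm argument.
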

\begin{proof}
Fix $\eta\in\{\omega\}$ the unique Gauduchon representative in $\{\omega\}$ with volume $1$.
We should solve \eqref{eq: elliptic_equation_G-Y_problem} with $\lambda=0$, that is
$$ C_t\Delta^{Ch}_\eta f = -S^{t}(\eta) \; . $$

Using the relation in Lemma \ref{lem:chern-laplacian} it can be shown (see \cite{gauduchon-mathann}) that the Kernel of the Chern Laplacian consists of just the constant functions. Indeed, we recall that
$$ \Delta^{Ch}_\eta f = \Delta_d f + (df,\, \theta )_{\eta} \;, $$
where $\theta$ denotes the torsion 1-form of $\eta$. Thus if we take a function $u$ in $\ker(\Delta^{Ch})$ we have
\begin{align*}
 0= \int_M u \Delta^{Ch}_\eta u \,d\mu_{\eta} = \int_M \left( |\nabla u|^2 + \frac{1}{2} (du^2 , \, \theta ) \right) d\mu_{\eta}  = 
\int_M  |\nabla u|^2  \,d\mu_{\eta} \; ,
\end{align*}
since $d^*\theta=0$ because $\eta$ is Gauduchon.\\
It follows that two conformal metrics with zero Gauduchon scalar curvature differ by a multiplicative constant, which in turn must be one if they are both in $\{\omega\}_1$, and so we get the uniqueness.

\medskip

From the above equality, it is also possible to compute (see \cite{gauduchon-mathann}) the adjoint of $\Delta^{Ch}_\eta$ on smooth functions $u$ as 
$$ (\Delta^{Ch}_\eta)^* u = \Delta_d u - (du,\, \theta )_{\eta} \, . $$ 
Thus the same computation applies and hence also the Kernel of the adjoint of the Chern Laplacian of a Gauduchon metric consists of just the constants. Since the integral of  $-S^{t} (\eta)$ is zero by hypothesis, $-C_t^{-1}S^{t} (\eta)\in \left( \ker (\Delta^{Ch}_\eta)^* \right)^\perp = \mathrm{im}\, \Delta^{Ch}_\eta$.
We thus achieve the existence of a metric of zero Gauduchon scalar curvature.
\end{proof}

\subsection{Non-linear case}\label{sec: non-linear}

Here we provide a positive answer for the Gauduchon-Yamabe problem when $C_t\Gamma^t_M(\{\omega\})<0$. As a particular case we will obtain the solution of the Bismut--Yamabe problem when the Gauduchon degree $\Gamma^{-1}_M(\{\omega\})$ is strictly positive and the complex dimension of the manifold is greater than three. 

\begin{thm}\label{thm:  gau-yam}
    Let $M$ be a compact complex manifold with Hermitian structure $(\omega,J)$.
    Fix a Gauduchon parameter $t$ for which $C_t\Gamma_M^t(\{\omega\}) < 0$. Then there exists a unique $\widetilde{\omega}\in\{\omega\}_1$ with constant $\nabla^t$-scalar curvature. Moreover, its Gauduchon scalar curvature satisfies $S^{t}(\widetilde{\omega})=\Gamma_M^t(\{\omega\})$.
\end{thm}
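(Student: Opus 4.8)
The plan is to reduce the statement to solving the semilinear elliptic equation \eqref{eq: elliptic_equation_G-Y_problem} and then to run the method of sub- and super-solutions, the sign hypothesis $C_t\Gamma_M^t(\{\omega\})<0$ being exactly what produces the favourable monotonicity. Fix the unique volume-one Gauduchon representative $\eta\in\{\omega\}$, with torsion $1$-form $\theta$, set $S:=S^t(\eta)$ and $\lambda:=\Gamma_M^t(\{\omega\})$. By the discussion leading to \eqref{eq: elliptic_equation_G-Y_problem}, producing the desired $\widetilde\omega\in\{\omega\}$ amounts to finding $f\in\mathcal C^\infty(M;\R)$ with $C_t\Delta^{Ch}_\eta f + S = \lambda\,e^{2f}$. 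Dividing by $C_t\neq0$ and putting $\widetilde S:=S/C_t$ and $\widetilde\lambda:=\lambda/C_t$, the hypothesis $C_t\lambda<0$ becomes $\widetilde\lambda<0$, while $\int_M\widetilde S\,d\mu_\eta=\lambda/C_t=\widetilde\lambda$ because $\eta$ has unit volume. It therefore suffices to solve $\Delta^{Ch}_\eta f+\widetilde S=\widetilde\lambda e^{2f}$ with $\widetilde\lambda<0$.

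For existence I would use sub- and super-solutions. A constant $f_+\equiv M_+$ with $M_+\gg0$ is a super-solution: the left-hand side equals $\widetilde S$, whereas the right-hand side $\widetilde\lambda e^{2M_+}\to-\infty$, so $\Delta^{Ch}_\eta f_++\widetilde S\ge\widetilde\lambda e^{2f_+}$ once $M_+$ is large. For a sub-solution, first solve the linear equation $\Delta^{Ch}_\eta w=\widetilde\lambda-\widetilde S$; this is solvable because its right-hand side integrates to zero against $d\mu_\eta$ and, exactly as in the proof of Theorem \ref{thm: zero gau-yam}, $\operatorname{im}\Delta^{Ch}_\eta=(\ker(\Delta^{Ch}_\eta)^*)^\perp$ is the $L^2_\eta$-orthogonal complement of the constants. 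Then $f_-:=w-M_-$ gives $\Delta^{Ch}_\eta f_-+\widetilde S=\widetilde\lambda\le\widetilde\lambda e^{2(w-M_-)}$ as soon as $M_-\ge\max_M w$, the inequality reversing precisely because $\widetilde\lambda<0$, so $f_-$ is a sub-solution, and one arranges $f_-\le f_+$ by enlarging the constants. A solution with $f_-\le f\le f_+$ is then obtained by monotone iteration: choosing $K>0$ so large that $u\mapsto\widetilde\lambda e^{2u}-\widetilde S+Ku$ is nondecreasing on $[\min f_-,\max f_+]$, the shifted operator $\Delta^{Ch}_\eta+K$ has trivial kernel and satisfies the maximum principle, hence is invertible with positive inverse, and the iteration started at $f_+$ decreases to a smooth solution by elliptic regularity.

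Uniqueness follows from the maximum principle together with the strict monotonicity of the nonlinearity. If $f_1,f_2$ both solve the equation, then $v:=f_1-f_2$ satisfies $\Delta^{Ch}_\eta v=\widetilde\lambda(e^{2f_1}-e^{2f_2})$; at a point where $v$ attains its maximum one has $\Delta^{Ch}_\eta v\ge0$, whereas if $v>0$ there then $e^{2f_1}>e^{2f_2}$ and the right-hand side is strictly negative because $\widetilde\lambda<0$, a contradiction. Hence $\max_M v\le0$, and by symmetry $v\equiv0$. The resulting $\widetilde\omega$ then has constant $\nabla^t$-scalar curvature equal to $\lambda=\Gamma_M^t(\{\omega\})$, and Proposition \ref{prop:gamma-sign} guarantees both $\widetilde\omega\in\{\omega\}_1$ and $S^t(\widetilde\omega)=\Gamma_M^t(\{\omega\})$, completing the statement.

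The main obstacle I anticipate is the non-self-adjointness of $\Delta^{Ch}_\eta$, caused by the torsion term $(df,\theta)_\eta$ of Lemma \ref{lem:chern-laplacian}, which rules out a naive variational treatment; passing to the shifted operator $\Delta^{Ch}_\eta+K$ and exploiting its maximum principle is what makes the sub/super-solution scheme go through, and the construction of the sub-solution crucially relies on the linear solvability afforded by the Gauduchon condition $d^*\theta=0$. I would stress that this tractable case rests entirely on the sign $\widetilde\lambda<0$: the opposite sign, i.e. the genuine positive-curvature regime, would destroy both the monotonicity used for uniqueness and the cheap constant super-solution, and is deliberately not addressed here.
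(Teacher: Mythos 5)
Your proposal is correct, but it takes a genuinely different route from the paper's. The paper proves existence by a continuity method: after a preliminary conformal normalization (a linear solve, available because $\eta$ is Gauduchon) producing a reference metric with $C_tS^t(\omega)<0$ pointwise, it deforms along a one-parameter family of equations, proving openness via the implicit function theorem (the linearized operator is injective by the maximum principle and has index zero, being a compact perturbation of $\Delta^{Ch}_\omega$) and closedness via uniform $L^\infty$ bounds from the maximum principle followed by Calderon--Zygmund estimates, Sobolev embedding and Ascoli--Arzel\`a. You instead build ordered barriers --- a large constant super-solution, and a sub-solution obtained from the very same linear solvability statement, $\mathrm{im}\,\Delta^{Ch}_\eta=(\ker(\Delta^{Ch}_\eta)^*)^\perp$ with constants as kernel and cokernel, as in the proof of Theorem \ref{thm: zero gau-yam} --- and run a monotone iteration with the shifted operator $\Delta^{Ch}_\eta+K$; its trivial kernel, zero index and positivity of the inverse all follow from the maximum principle, which applies despite the non-self-adjointness because the first-order torsion term of Lemma \ref{lem:chern-laplacian} vanishes at extrema. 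Both arguments ultimately rest on the same two structural facts: the sign $\lambda/C_t<0$, which gives the monotonicity of the nonlinearity, and the Gauduchon condition, which gives the linear theory; note that the paper's preliminary normalization step plays exactly the role your sub-solution construction plays. Your route is arguably more elementary --- it avoids the implicit function theorem and the separate compactness/closedness step, the a priori estimates being absorbed into the two-sided barrier --- though you still need the same elliptic $L^p$ and Schauder machinery to pass the iteration to the limit and bootstrap to smoothness. Your uniqueness argument coincides with the paper's, and your appeal to Proposition \ref{prop:gamma-sign} correctly places $\widetilde{\omega}$ in $\{\omega\}_1$, identifies $S^t(\widetilde{\omega})=\Gamma_M^t(\{\omega\})$, and forces any competitor in $\{\omega\}_1$ to solve the same equation. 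One cosmetic caveat: your sub- and super-solution inequalities are adapted to the paper's convention that $\Delta^{Ch}$ is nonnegative at interior maxima; it would be worth saying so explicitly, since with the opposite (analyst's) sign convention the two roles swap.
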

\begin{proof}
    Fix $\eta\in\{\omega\}$ the unique Gauduchon representative in $\{\omega\}$ with volume $1$. By hypothesis, we have
    $$ C_t\Gamma_M^t(\{\omega\}) = C_t\int_M S^{t}(\eta) d\mu_\eta < 0 \;. $$
    The proof of the existence of a constant $\nabla^t$-scalar curvature metric consists of two steps. We apply a continuity method to prove the existence of a constant $\nabla^t$-scalar curvature metric in $\{\omega\}$ of class $\mathcal{C}^{2,\alpha}$; then we exploit the structure of the elliptic equation by a standard bootstrap argument to prove that it is smooth.

\medskip

    Before starting with the continuity method, we need a preliminary step. Namely, we prove that in the normalized conformal class $\{\omega\}_1$ there is a metric which has $\nabla^t$-scalar curvature of constant sign $-sign(C_t)$. By this, we can assume that $C_tS^{t}(\omega)<0$ at every point.\\
    Consider the equation
    \begin{align}
        \Delta^{Ch}_\eta f = -S^{t} (\eta) + \int_M S^{t}(\eta) d\mu_{\eta} \: .
    \end{align}
    Since $\eta$ is Gauduchon, arguing as in the proof of Theorem \ref{thm: zero gau-yam}, the above equation has a solution $f\in\mathcal{C}^\infty(M;\R)$, which is unique once we require $\int_M \exp(2f/C_t)d\mu_\eta=1$.
    Then $\exp(2f/C_t)\eta\in\{\omega\}_1$ satisfies
    \begin{align*}
        C_tS^{t} (\exp(2f/C_t)\eta) &=  \exp(-2f/C_t)C_t \left(S^{t}(\eta)+\Delta^{Ch}_\eta f\right) \\[5pt]
        &= \exp(-2f/C_t)C_t \int_M S^{t}(\eta) d\mu_{\eta} \\[5pt]
        &= \exp(-2f/C_t)C_t \Gamma_M^t(\{\omega\}) < 0 \; .
    \end{align*}

\medskip

    Now we can set up the following continuity path using as reference metric in the conformal class of $\eta$ the above metric $\omega$ with $C_tS^{t}(\omega)<0$.
    Consider the map, for $\alpha \in (0,\,1)$,
    \begin{align*}
        \mathrm{GaYa} \colon [0,\,1]\times\mathcal{C}^{2,\alpha}(M ; \mathbb{R}) \rightarrow  C^{0,\alpha} (M ; \mathbb{R}) \;,
    \end{align*}
    such that
    $$ \mathrm{GaYa}(s,f) := \Delta^{Ch}_\omega f + s S^{t} (\omega)  - \lambda \exp(2f/C_t) + \lambda (1-s)\; . $$
    Let us define the set
    $$ S:= \left\{ s\in [0,\, 1] \;\middle|\;  \exists f_s \in C^{2 , \, \alpha} (M ; \mathbb{R}) \mbox{ such that } \mathrm{GaYa}(s,\, f_s)=0 \right\} \; , $$
    which trivially is non-empty since $\mathrm{GaYa}(0,0)=0$. Thus, we should prove that it is also open and closed since the expected solution is achieved when $s=1$. 

\medskip

    We start with the open condition. The implicit function theorem for Hilbert spaces guarantees that $S$ is open as long as the linearization of $\mathrm{GaYa}$ with respect to the second variable is bijective. Hence, we prove that, for a fixed solution $\mathrm{GaYa}(s_0, f_{s_0})=0$, the linearized operator of $\mathrm{GaYa}$,
    $$ D \colon C^{2 , \, \alpha} (M ; \mathbb{R}) \rightarrow C^{0 , \, \alpha} (M ; \mathbb{R}) $$
    defined by
    $$ v \mapsto Dv : = \Delta^{Ch}_\omega v - \lambda \exp\left(2f_{s_0}/C_t\right)\cdot 2v/C_t  $$
    is bijective. Let us remark that $D$ differs from the Chern Laplacian by a compact operator, thus they have the same index, zero. This means that injectivity directly implies surjectivity, hence we are reduced to prove the former.

    If $v$ belongs to $\ker D$, then at a maximum point $p$ for $v$ there holds
    \begin{align}
        - \lambda \exp(2f_{t_0}(p)/C_t)\cdot 2v(p)/C_t \leq 0 \, ,
    \end{align}
    and hence $v(p)\leq 0$, since $-\lambda /C_t>0$. Similarly, at a minimum point $q$ for $v$, there holds $v(q)\geq 0$. Thus, $\ker D = \{0\}$. 

\medskip

    To show that $S$ is also closed we argue as follows. Take $\{ s_n\} \subset S$ a sequence converging to $s_{\infty}$ and $f_{s_n} \in\mathcal{C}^{2,\alpha}(M;\R)$ such that $\mathrm{GaYa} (s_n , \, f_{s_n} ) = 0$ for any $n$; we will use the Ascoli-Arzel\`a theorem to prove that the $f_{s_n}$ converge in $\mathcal{C}^{2,\alpha}(M;\R)$ to a function $f_{\infty}$ such that $\mathrm{GaYa} (s_\infty , \, f_{s_\infty} ) = 0$. To use that theorem, we first need uniform $L^\infty$ estimates of the solutions $f_{s_n}$.
    \begin{lem}\label{lem: C zero unif estimate}
        There exists a positive constant $K$, depending only on $M$, $\omega$, $\lambda$ and $t$ such that, for any $n$, we have
        \begin{align}
            \|f_{s_n}\|_{L^\infty} \leq K \; .
        \end{align}
    \end{lem}
    \begin{proof}
        By hypothesis the functions $f_{s_n}$ satisfy $\mathrm{GaYa} (s_n , \, f_{s_n} ) = 0$, which means that the following equality holds:
        \begin{align}\label{appia}
            \Delta^{Ch}_\omega f_{s_n} + s_n S^{t} (\omega)  - \lambda \exp\left(2f_{s_n}/C_t \right) + \lambda (1-s_n) = 0 \, .
        \end{align}
        We distinguish the two cases $C_t>0$ and $C_t<0$ which correspond to $\lambda<0$ or $\lambda>0$ respectively. We also recall that, by the preliminary step in the proof, $S^{t} (\omega)$ can be supposed to be a negative function when $C_t>0$ and positive when $C_t<0$.
        
        Suppose $C_t>0$ and take a maximum point $p$ for $f_{s_n}$. Then, at $p$, there holds
        \begin{align}
            - \lambda \exp\left(2f_{s_n}(p)/C_t \right) \leq -s_n S^{t} (\omega)(p) - \lambda (1-s_n) \leq -\left( \min_M S^{t} (\omega) \right) -\lambda \, .
        \end{align}
        On the other hand, at a minimum point for $f_{s_n}$, say $q$, there holds
        \begin{eqnarray*}
            - \lambda \exp\left(2f_{s_n}(q)/C_t \right) &\geq& -s_n S^{t} (\omega)(q) - \lambda (1-s_n) \geq s_n (-S^{t}(\omega)(q) +\lambda) -\lambda\\[5pt]
            &\geq& \min \left\{ \min_M \left(-S^{t}(\omega)\right) ,\, -\lambda  \right\} >0\; .
        \end{eqnarray*}
        The above estimates provide the claimed uniform constant $K_0$. The same argument holds also for $C_t<0$, indeed, in this case, at a maximum point $p$ for $f_{s_n}$, we have
        \begin{eqnarray*}
            \lambda \exp\left(2f_{s_n}(p)/C_t \right) &\geq& s_n S^{t} (\omega)(p) + \lambda (1-s_n) \geq s_n (S^{t}(\omega)(p) -\lambda) +\lambda\\[5pt]
            &\geq& \min \left\{ \min_M \left(S^{t}(\omega)\right) ,\, \lambda  \right\} \; ,
        \end{eqnarray*}
        while at a minimum point $q$ for $f_{s_n}$, there holds
        \begin{align}
            \lambda \exp\left(2f_{s_n}(q)/C_t \right) \leq s_n S^{t} (\omega)(q) + \lambda (1-s_n) \leq \max_M S^{t} (\omega) +\lambda \, .
        \end{align}
        Hence the lemma is proved.
    \end{proof}
    Now it remains to prove the uniform equicontinuity of the functions $\{f_{s_n}\}$ in $\mathcal{C}^{2,\alpha}(M ; \mathbb{R})$. We define the elliptic operators
    $$ L_n f : = \Delta^{Ch}_\omega f + s_n S^{t} (\omega) + \lambda (1-s_n) \;. $$
    For the functions $f_{s_n}$ we get the equalities
    $$ L_n f_{s_n} = \lambda \exp\left(2f_{s_n}/C_t \right) \; .$$
    The estimate of Lemma \ref{lem: C zero unif estimate} gives a uniform $L^\infty$ control of the right-hand side $\lambda \exp\left(2f_{s_n}/C_t \right)$ of the equation and hence a uniform $L^p$ control of $L_n f_{s_n}$ for any $p\in(1,\infty)$. Then, by the Calderon-Zygmund inequality we can control the $p$-norm of the second-order derivatives by the $p$-norms of the function and its Laplacian; hence, iterating it twice, we get that $f_{s_n}\in W^{4,p}(M;\R)$ with uniform bound on the norms. Finally, we can use the Sobolev embedding taking $p$ large enough so that we find an a-priori $\mathcal{C}^{3}$ uniform bound on the solutions. Thus now we can apply the Ascoli-Arzel\`a theorem so that we get a subsequence (which we still call $\{f_{s_n}\}$) converging in $\mathcal{C}^{2,\alpha}(M;\R)$ to a function $f_{s_\infty}$. We can take the limit in the equation (\ref{appia}); in this way we see that $f_{s_\infty}$ is a solution of $\mathrm{GaYa} (s_\infty , \, f_{s_\infty} ) = 0$ as needed.

\medskip

    So far we achieved the existence of a $\mathcal{C}^{2,\alpha}$ solution $f$ to the Gauduchon-Yamabe equation, $\mathrm{GaYa}(1,f)=0$. Hence we have $f\in\mathcal{C}^{2,\alpha}$ such that
    $$ \Delta^{Ch}_\omega f=\lambda e^{2f/C_t} - S^t(\omega) \;.$$
    Notice that the right-hand side has the same regularity of $f$, hence the smooth regularity of the solution follows by the usual bootstrap argument via Schauder's estimates for elliptic operators. 

\medskip

    Now we have a smooth function $f$ solving 
    $ \Delta^{Ch}_\omega f=\lambda e^{2f/C_t} - S^t(\omega) $
    and we want to prove its uniqueness.
    
    Notice that by Proposition \ref{prop:gamma-sign} since we have $\lambda=\Gamma_M^t(\{\omega\})$, $e^f\omega$ must be in $\{\omega\}_1$; moreover, any other metric in $\{\omega\}_1$ with constant $\nabla^t$-scalar curvature must solve the same equation.

    Now suppose we have two conformal metrics $\omega_1=\exp(2f_1/C_t )\omega$ and $\omega_2=\exp(2f_2/C_t)\omega$ in $\{\omega\}_1$ with constant $\nabla^t$-scalar curvatures equal to $\lambda$. Hence we have the equations
    $$ \Delta^{Ch}_{\omega} f_1 + S^{t}(\omega) = \lambda \exp \left( 2f_1/C_t \right)
    \qquad \text{ and } \qquad
    \Delta^{Ch}_{\omega} f_2 + S^{t}(\omega) = \lambda \exp \left( 2f_2/C_t \right) \;. $$
    Taking the difference of these, we get the equation
    $$ \Delta^{Ch}_{\omega} (f_1-f_2) = \lambda ( \exp \left( 2f_1/C_t \right) - \exp \left( 2f_2/C_t \right))\;.$$
    At a first glance, we should distinguish the cases $C_t>0$ or $C_t<0$ for which we respectively have $\lambda<0$ and $\lambda>0$; however, in both cases at a maximum point $p$ for $f_1-f_2$, we find
    $f_1 (p) - f_2 (p) \;\leq0$ while at a minimum point $q$, we have
    $f_1 (q) - f_2 (q) \;\geq0$, proving that $f_1$ and $f_2$ coincide.
\end{proof}

\begin{rmk}
    In case $C_t\Gamma_M^t(\{\omega\}) > 0$, the maximum principle does not apply and the Gauduchon-Yamabe equation loses its good analytical properties. For the Chern connection, this case corresponds to having a positive Gauduchon degree and it is investigated in Section 5 of \cite{Chern-Yamabe} where some sufficient criteria for the existence of positive constant Chern scalar curvature metrics are found. Moreover, non-homogeneous examples of Hermitian metrics of positive constant Chern scalar curvature have been constructed in \cite{Mehdi0} and \cite{Angella Pediconi}.\\
    Similarly, it would be interesting to find new explicit examples of constant Bismut scalar curvature metrics as well as some sufficient (and, possibly, necessary) conditions which ensure the existence of metrics with negative constant scalar curvature for the Bismut connection.\\
    We also remark that the "critical" Gauduchon connection for which the constant $C_t$ vanishes are left out by these theorems. In particular, in complex dimension $2$ it happens for the Bismut connection, since $C_t=1+nt-t=1+t$. 
\end{rmk}
We then have the following result.
\begin{cor}\label{cor: bis-yam}
    Let $M$ be a compact complex manifold with $\dim_\C M\geq 3$ and Hermitian structure $(\omega,J)$.
    If $\Gamma_M^+(\{\omega\}) \geq 0$, then there exists a unique $\widetilde{\omega}\in\{\omega\}_1$ with constant Bismut scalar curvature. Moreover, $S^{+}(\widetilde{\omega})=\Gamma_M^+(\{\omega\})$.
\end{cor}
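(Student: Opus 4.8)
The plan is to deduce this statement directly from Theorems \ref{thm: zero gau-yam} and \ref{thm:  gau-yam} by specializing the Gauduchon parameter to $t=-1$, which is precisely the value corresponding to the Bismut connection $\nabla^+$. The first thing to do is to record the value of the conformal constant $C_t=1+nt-t$ at $t=-1$: one computes $C_{-1}=1-n+1=2-n$. Since by hypothesis $\dim_\C M=n\geq 3$, this gives $C_{-1}=2-n\leq -1<0$; in particular $C_{-1}\neq 0$, so the degenerate ``critical'' case excluded by the two theorems does not occur.

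Next I would split the argument according to the sign of the Bismut Gauduchon degree $\Gamma_M^+(\{\omega\})=\Gamma_M^{-1}(\{\omega\})$. If $\Gamma_M^+(\{\omega\})=0$, then since $C_{-1}\neq 0$ the hypotheses of Theorem \ref{thm: zero gau-yam} are met, and that theorem provides a unique $\widetilde\omega\in\{\omega\}_1$ with constant $\nabla^+$-scalar curvature equal to $\Gamma_M^+(\{\omega\})=0$. If instead $\Gamma_M^+(\{\omega\})>0$, then the product $C_{-1}\,\Gamma_M^{-1}(\{\omega\})=(2-n)\,\Gamma_M^+(\{\omega\})$ is strictly negative, because $2-n<0$ and $\Gamma_M^+(\{\omega\})>0$; this is exactly the sign condition $C_t\Gamma_M^t(\{\omega\})<0$ required by Theorem \ref{thm:  gau-yam}, which then yields a unique $\widetilde\omega\in\{\omega\}_1$ with constant Bismut scalar curvature satisfying $S^+(\widetilde\omega)=\Gamma_M^+(\{\omega\})$.

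Since the two cases $\Gamma_M^+(\{\omega\})=0$ and $\Gamma_M^+(\{\omega\})>0$ exhaust the hypothesis $\Gamma_M^+(\{\omega\})\geq 0$, combining them gives the full statement, including the identification of the constant value with the Gauduchon degree via Proposition \ref{prop:gamma-sign}. There is essentially no analytic obstacle here, since all the hard work—the continuity method, the uniform $L^\infty$ estimates, the bootstrap to smoothness, and the uniqueness via the maximum principle—is already carried out in the proofs of the two theorems. The only point that genuinely needs care is the computation of the sign of $C_{-1}=2-n$ together with the observation that the hypothesis $n\geq 3$ guarantees both $C_{-1}\neq 0$ and $C_{-1}<0$; this is what makes the nonnegativity assumption $\Gamma_M^+(\{\omega\})\geq 0$ (rather than a nonpositivity assumption) the correct one for the Bismut connection in dimensions at least three.
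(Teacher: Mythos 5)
Your proposal is correct and is exactly the deduction the paper intends: the corollary is stated without explicit proof because it is the specialization $t=-1$ of Theorems \ref{thm: zero gau-yam} and \ref{thm:  gau-yam}, with $C_{-1}=2-n<0$ for $n\geq 3$ forcing the sign condition $C_{-1}\,\Gamma_M^+(\{\omega\})<0$ when $\Gamma_M^+(\{\omega\})>0$, and the linear case handling $\Gamma_M^+(\{\omega\})=0$. Your case split and the observation that $n\geq 3$ rules out the critical value $C_{-1}=0$ (which occurs precisely at $n=2$, as the paper's remark notes) match the paper's reasoning exactly.
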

\begin{rmk}
    In \cite{Mehdi} the authors extended the results of Angella, Calamai, and Spotti on the Chern-Yamabe problem to the non-integrable case. Our results (Theorems \ref{thm: zero gau-yam} $\&$ \ref{thm:  gau-yam}) can also be extended to the non-integrable case.
\end{rmk}

\section{Constant Bismut Ricci curvature}\label{sec: Bismut Einstein}
We study the CYT equation on rank one toric bundles over Hermitian manifolds imitating the setting of \cite{Gran Gran Poon}. In that article, the authors derive useful formulas for the Bismut Ricci curvature of special metrics on principal toric bundles over compact K{\"a}hler manifolds. Using them they construct CYT structures on the manifolds $(k-1)(\mathbb{S}^2\times\mathbb{S}^4)\# k(\mathbb{S}^3\times\mathbb{S}^3)$ for all $k \geq 1$.\\

Given a Hermitian manifold $(X,\omega_X)$, consider a principal toric bundle $$\mathbb{S}^1 \times\mathbb{S}^1 \hookrightarrow M \xrightarrow{\pi} X$$ with characteristic classes of type $(1,1)$. 
We take a connection one form with values in the Lie algebra of $\mathbb{S}^1\times\mathbb{S}^1$ given by $(\theta_1,\theta_2)$ such that $d\theta_i=\pi^*\omega_i$, with $\omega_i$ $(1,1)$-forms on $X$. 
Once we fix a complex structure on the torus, $M$ inherits a complex structure from that of $X$, such that the projection map $\pi$ from $M$ to $X$ is holomorphic, see Lemma 1 of \cite{Gran Gran Poon} for details on this. 
We consider the Hermitian metrics on $M$ for which $\pi$ becomes a Riemannian submersion. 
These are all of the forms
$$\omega=\pi^*(\omega_X) + f\theta_1\wedge\theta_2 \;,$$ 
where $f$ is a positive function on $M$, which is constant along the fibers (thus we will usually think of it as a function on $X$). 
The Bismut Ricci form of $\omega$ is given by
\begin{equation}\label{eq: zero}
        Ric^+(\omega)=\pi^*\left(Ric^+(\omega_X)\right) - dd^* (f\theta_1\wedge\theta_2)\;.
\end{equation}
Indeed, from Proposition \ref{prop: relation Bismu Chern Ricci},
\begin{align*}
    Ric^+\omega&= Ric^{Ch}\omega - dd^*\omega = \pi^*Ric^{Ch}\omega_X - dd^*\omega = \pi^*\left(Ric^+\omega_X + dd^*\omega_X\right) - dd^*\omega\\
    &= \pi^*\left(Ric^+\omega_X\right) + dd^*(\pi^*\omega_X)- dd^*\omega = \pi^*Ric^+(\omega_X) - dd^* (f\theta_1\wedge\theta_2)
\end{align*}
where the second equality comes from the following lemma.
\begin{lem*}[Lemma 3 of \cite{Gran Gran Poon}]
    Let $Ric^{Ch}\omega$ and $Ric^{Ch}\omega_X$ be the Ricci forms of the Chern connections on $(M,\omega)$ and $(X,\omega_X)$ respectively. Then $Ric^{Ch}\omega=\pi^*(Ric^{Ch}\omega_X)$.
\end{lem*} 
We work on a Hermitian frame $\{e_1,\ldots,e_{2n},t_1,t_2\}$ on an open subset of $M$ which comes from a local Hermitian frame $\{e_1,\ldots,e_{2n}\}$ on an open subset of $X$ extended so that the vector fields $t_1, t_2$ are dual to the $1$-forms $\theta_1,\theta_2$.
\begin{lem}
    The following equations hold:
    \begin{itemize}
        \item $[t_i,e_j]=0$ for any $i=1,2$ and $j=1,\ldots,2n$;
        \item $\theta_i(\sum_j [e_{2j-1},e_{2j}])=-tr_{\omega_X}(\omega_i)$ for $i=1,2$.
    \end{itemize}
\end{lem}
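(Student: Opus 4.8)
The plan is to prove the two identities separately, both being local statements about the connection on the principal bundle. Throughout I regard the $e_j$ as the horizontal lifts of a fixed $J$-adapted orthonormal (Hermitian) frame $\{\bar e_1,\dots,\bar e_{2n}\}$ on $X$, so that $\bar e_{2j}=J\bar e_{2j-1}$, and the $t_i$ as the fundamental vector fields of the $\mathbb{S}^1\times\mathbb{S}^1$-action, characterized by $\theta_i(t_k)=\delta_{ik}$; in particular $\theta_i(e_j)=0$ by horizontality.

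For the first identity I would use that the horizontal lift of a fixed field on $X$ is invariant under the structure group. The connection forms $\theta_i$ are invariant under the (abelian) torus action, hence so is the horizontal distribution $\ker\theta_1\cap\ker\theta_2$; since the projected field $\bar e_j$ is fixed on the base, its horizontal lift $e_j$ is the unique horizontal field over $\bar e_j$ and is therefore preserved by the flow of each $t_i$. As the flow of the fundamental field $t_i$ is exactly a one-parameter subgroup of the torus action, this invariance reads $\mathcal{L}_{t_i}e_j=[t_i,e_j]=0$. Equivalently, one can check this in a local trivialization $\pi^{-1}(U)\cong U\times\mathbb{S}^1\times\mathbb{S}^1$ with angular coordinates $\phi_i$: writing $\theta_i=d\phi_i+\pi^*A_i$ with $dA_i=\omega_i$ and $t_i=\partial_{\phi_i}$, the horizontal lift is $e_j=\bar e_j-\sum_i A_i(\bar e_j)\,\partial_{\phi_i}$, whose coefficients are independent of the fiber coordinates, whence $[\partial_{\phi_i},e_j]=0$.

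For the second identity I would apply the Cartan formula to the one-form $\theta_i$. For horizontal fields $e_{2j-1},e_{2j}$ the terms $e_{2j-1}(\theta_i(e_{2j}))$ and $e_{2j}(\theta_i(e_{2j-1}))$ vanish identically, leaving
\[
    d\theta_i(e_{2j-1},e_{2j}) = -\,\theta_i([e_{2j-1},e_{2j}]).
\]
Using the given relation $d\theta_i=\pi^*\omega_i$ together with $\pi_*e_k=\bar e_k$, the left-hand side equals $\omega_i(\bar e_{2j-1},\bar e_{2j})$. Summing over $j$ then yields
\[
    \theta_i\Big(\sum_j[e_{2j-1},e_{2j}]\Big) = -\sum_j \omega_i(\bar e_{2j-1},\bar e_{2j}).
\]

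The last step, and the only point needing care, is to recognize the remaining sum as a trace. Since $\{\bar e_1,\dots,\bar e_{2n}\}$ is a $J$-adapted orthonormal frame, each pair $(\bar e_{2j-1},\bar e_{2j}=J\bar e_{2j-1})$ is the real/imaginary part of a unit vector of a unitary frame, and for any $(1,1)$-form $\alpha$ one has $\sum_j \alpha(\bar e_{2j-1},J\bar e_{2j-1})=tr_{\omega_X}\alpha$; this is just the coordinate-free description of the trace, as one verifies on $\alpha=\omega_X$, where it returns $n$. Applying this with $\alpha=\omega_i$ gives $\sum_j\omega_i(\bar e_{2j-1},\bar e_{2j})=tr_{\omega_X}(\omega_i)$, and hence $\theta_i(\sum_j[e_{2j-1},e_{2j}])=-tr_{\omega_X}(\omega_i)$. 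I expect no real obstacle here beyond keeping the sign and normalization conventions consistent when passing between the real adapted frame and $tr_{\omega_X}$; fixing the convention so that $tr_{\omega_X}\omega_X=n$ makes the identification above transparent.
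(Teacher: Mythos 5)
Your proposal is correct. On the second identity it coincides with the paper's proof: Cartan's formula together with horizontality gives $\theta_i([e_{2j-1},e_{2j}])=-d\theta_i(e_{2j-1},e_{2j})=-\omega_i(e_{2j-1},e_{2j})$, and summing over the adapted pairs yields $-tr_{\omega_X}\,\omega_i$; here you are actually more careful than the paper at the final step, since you justify the identification $\sum_j\omega_i(\bar e_{2j-1},J\bar e_{2j-1})=tr_{\omega_X}\,\omega_i$ (with the normalization $tr_{\omega_X}\omega_X=n$), which the paper asserts without comment, implicitly using that the frame is Hermitian, i.e.\ $e_{2j}=Je_{2j-1}$. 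On the first identity you take a genuinely different route. The paper stays entirely inside the stated condition $d\theta_k=\pi^*\omega_k$: it first notes that $[t_i,e_j]$ is vertical, because $\pi_*t_i=0$ and $e_j$ is $\pi$-related to a fixed field on $X$, and then kills the vertical part by computing $\theta_k([t_i,e_j])=-d\theta_k(t_i,e_j)=-\pi^*\omega_k(t_i,e_j)=0$, the last equality again because $t_i$ is vertical. You instead invoke the $\mathbb{S}^1\times\mathbb{S}^1$-invariance of the connection forms (automatic for a principal connection with abelian structure group), deduce invariance of the horizontal distribution and hence of the horizontal lift $e_j$ under the torus flow, and conclude $\mathcal{L}_{t_i}e_j=[t_i,e_j]=0$; your local-trivialization check with $\theta_i=d\phi_i+\pi^*A_i$ and $e_j=\bar e_j-\sum_i A_i(\bar e_j)\,\partial_{\phi_i}$ is equally valid. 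The trade-off: the paper's computation is more economical in hypotheses, using only $d\theta_i=\pi^*\omega_i$ and the duality of the frame, whereas your argument uses the extra (standard, and here certainly intended) fact that $(\theta_1,\theta_2)$ is a principal connection, in return for a structural explanation --- the bracket vanishes because horizontal lifts of fixed base fields are invariant under the structure group, a statement that persists verbatim for non-abelian structure groups, where the paper's two-step computation would need adjusting.
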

\begin{proof}
    We derive these equations from the conditions $d\theta_i=\pi^*\omega_i$ ($i=1,2$).\\
    First of all, since the Lie brackets are $\pi$-related, i.e. $\pi_*[u,v]=[\pi_*u,\pi_*v]$ for any smooth vector fields $u,v$, we have that $[t_i,e_j]$ must be vertical. However, 
    $$ \theta_k([t_i,e_j])=-d\theta_k(t_i,e_j)=-\pi^*\omega_k(t_i,e_j)=0 $$
    We similarly obtain the second equation, indeed
    $$ \theta_i([e_{2j-1},e_{2j}])=-d\theta_i(e_{2j-1},e_{2j})=-\omega_i(e_{2j-1},e_{2j})$$
    thus
    $$ \theta_i\left(\sum_j [e_{2j-1},e_{2j}]\right)=-\sum_j \omega_i(e_{2j-1},e_{2j})=-tr_{\omega_X}\,\omega_i\;. $$
\end{proof}
We now compute $dd^*\widehat{\omega}$ where $\widehat{\omega}:=f\theta_1\wedge\theta_2$. First of all, recall that the co-differential of a tensor could be expressed in terms of the contraction of the Levi--Civita connection as
\begin{equation*}
    d^*\widehat{\omega}=-\sum_{j=1}^{2n} \nabla^{LC}_{e_j}\widehat{\omega}(e_j,\cdot)-\sum_{i=1,2} \nabla^{LC}_{t_i}\widehat{\omega}(t_i,\cdot)\;,
\end{equation*}
moreover for any smooth vector fields $u,v,w$ on a Hermitian manifold
\begin{equation*}
    -2\left(\nabla^{LC}_u \widehat{\omega}(v,w)\right)=d\widehat{\omega}(u,Jv,Jw)-d\widehat{\omega}(u,v,w)\;,
\end{equation*}
hence we have
\begin{align}\label{eq: dd^*omega}
    dd^*\widehat{\omega}=&\, d\left( \sum_j d\widehat{\omega}(J\cdot,e_{2j-1},e_{2j}) + d\widehat{\omega}(J\cdot,t_1,t_2)\right) \nonumber \\
    =&\, d\left(\sum_j  d\widehat{\omega}\left(t_2,e_{2j-1},e_{2j}\right)\theta_1 -\sum_j d\widehat{\omega}\left(t_1,e_{2j-1},e_{2j}\right)\theta_2 + d\widehat{\omega}(J\cdot,t_1,t_2)\right) \nonumber \\
    =&\, d\left( -\sum_j \widehat{\omega}\left([e_{2j-1},e_{2j}],t_2\right) \theta_1 + \sum_j \widehat{\omega}\left([e_{2j-1},e_{2j}],t_1\right) \theta_2 + d\widehat{\omega}(J\cdot,t_1,t_2)\right) \nonumber \\
    =&\, d\left( -f\left(\theta_1 \left(\sum_j [e_{2j-1},e_{2j}]\right)\theta_1 + \theta_2 \left(\sum_j [e_{2j-1},e_{2j}]\right)\theta_2 \right) + \left(\left((J e_j) \right)f\right)e^j \right) \nonumber\\
    =&\, d\left( f\left(tr\,\omega_1\,\theta_1 + tr\,\omega_2\,\theta_2 \right) + \left((J e_j) f\right)e^j \right) \nonumber\\
    =&\, df\wedge(tr\,\omega_1\,\theta_1 + tr\,\omega_2\,\theta_2) + fd(tr\,\omega_1\,\theta_1 + tr\,\omega_2\,\theta_2) + d\left(\left((J e_j) f\right)e^j\right) \nonumber \\
    =&\, \left( e_i\, (J e_j) f - e_j \, (J e_i) f\right)e^i\wedge
     e^j + f(tr\,\omega_1\,\pi^*\omega_1 + tr\,\omega_2\,\pi^*\omega_2) \\
    & + \left[  e_j (f \, tr\,\omega_i) - t_i\, (J e_j) f \right] e^j\wedge \theta^i + \left[ t_1(f\, tr\,\omega_2) - t_2(f\, tr\,\omega_1) \right] \theta_1\wedge\theta_2 \nonumber
\end{align}
where we used the Einstein notation and dropped the subscript $\omega_X$ on the traces $tr_{\omega_X}\,\omega_i$ for convenience. Since $f$ is constant along the fibers we obtain
\begin{equation}\label{eq: dd^*omega const fib}
    dd^*\widehat{\omega}= \pi^*dd^cf + f(tr\,\omega_1\,\pi^*\omega_1 + tr\,\omega_2\,\pi^*\omega_2) + \left[  e_j (f \, tr\,\omega_i) \right] e^j\wedge \theta^i \;.
\end{equation}
From this identity and equation (\ref{eq: zero}) we get the following result.
\begin{prop}\label{prop: Bismut Ricci flat}
    On the total space $M$ of a principal toric bundle of rank one $\mathbb{S}^1 \times\mathbb{S}^1 \hookrightarrow M \xrightarrow{\pi} X$ over a Hermitian manifold $(X,\omega_X)$ with connection one forms $(\theta_1,\theta_2)$, the metric $\omega=\pi^*(\omega_X) + f\theta_1\wedge\theta_2 \;,$ defines a Calabi--Yau with torsion structure if
    \begin{equation*}
        \begin{cases}
            Ric^+(\omega_X) = dd^c f +(c_1\omega_1 + c_2\omega_2)\\
            f\,tr_{\omega_X}\,\omega_i = c_i
        \end{cases}
    \end{equation*}
\end{prop}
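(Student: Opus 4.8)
The plan is to combine the Bismut--Ricci formula (\ref{eq: zero}) with the explicit expression (\ref{eq: dd^*omega const fib}) for $dd^*\widehat{\omega}$, and then to read off the vanishing condition $Ric^+(\omega)=0$ by splitting the resulting $2$-form on $M$ according to the bundle structure. Substituting (\ref{eq: dd^*omega const fib}) into (\ref{eq: zero}) gives
$$ Ric^+(\omega) = \pi^*\!\left( Ric^+(\omega_X) - dd^c f - f\,(tr\,\omega_1\,\omega_1 + tr\,\omega_2\,\omega_2) \right) - \big[\, e_j(f\, tr\,\omega_i)\,\big]\, e^j\wedge\theta^i\;, $$
where, as before, the traces are taken with respect to $\omega_X$.

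The key observation is that the two summands lie in complementary parts of the exterior algebra of $M$: the first is \emph{basic}, being a pullback from $X$ and hence a combination of the $e^i\wedge e^j$, whereas the second is \emph{mixed}, a combination of the $e^j\wedge\theta^i$. With respect to the pointwise basis $\{e_1,\dots,e_{2n},t_1,t_2\}$ these two types span subspaces meeting only in $0$, so $Ric^+(\omega)=0$ forces each summand to vanish separately, and conversely the vanishing of both summands gives $Ric^+(\omega)=0$.

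Thus I would treat the two conditions in turn. The mixed summand vanishes exactly when $e_j(f\, tr\,\omega_i)=0$ for every horizontal $e_j$ and each $i=1,2$; since $f\, tr_{\omega_X}\omega_i$ is constant along the fibers, it descends to a function on $X$, and the previous requirement is precisely that this function be constant. Calling these constants $c_i$ yields the second equation $f\, tr_{\omega_X}\omega_i = c_i$. Substituting $f\, tr\,\omega_i = c_i$ back into the basic summand replaces $f\,(tr\,\omega_1\,\omega_1 + tr\,\omega_2\,\omega_2)$ by $c_1\omega_1 + c_2\omega_2$, so the basic summand becomes $\pi^*\!\big( Ric^+(\omega_X) - dd^c f - (c_1\omega_1 + c_2\omega_2)\big)$. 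Because $\pi$ is a submersion, $\pi^*$ is injective on forms of $X$, and therefore this term vanishes exactly when $Ric^+(\omega_X) = dd^c f + (c_1\omega_1 + c_2\omega_2)$, which is the first equation.

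Since the only genuine computation --- that of $dd^*\widehat{\omega}$ --- has already been carried out in (\ref{eq: dd^*omega const fib}), no analytic difficulty remains, and the proof reduces to this bookkeeping by type. The single point deserving care, and the closest thing to an obstacle here, is the claim that the basic and mixed components cannot cancel each other: I would make it precise by noting that under the frame-induced splitting the horizontal term $\pi^*Ric^+(\omega_X)$ and the curvature contribution $f\,(tr\,\omega_i)\,\pi^*\omega_i$ have no $e^j\wedge\theta^i$ component, so that no cross cancellation is possible and the decomposition above is exactly the one separating the two stated conditions.
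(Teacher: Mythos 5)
Your proposal is correct and follows essentially the same route as the paper, which deduces Proposition \ref{prop: Bismut Ricci flat} exactly by substituting (\ref{eq: dd^*omega const fib}) into (\ref{eq: zero}) and requiring the basic part and the mixed $e^j\wedge\theta^i$ part to vanish separately. Your additional observation that the two components lie in complementary subspaces (so the conditions are in effect also necessary, up to constancy on each connected component of $X$) goes slightly beyond the stated ``if'' direction but is accurate and harmless.
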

Notice that if $tr_{\omega_X}\,\omega_i$ vanishes at some point then it must vanish everywhere since $f\,tr_{\omega_X}\,\omega_i$ is supposed to be a constant function and $f>0$. 
We now analyze the case of $tr_{\omega_X}\,\omega_1$, and $tr_{\omega_X}\,\omega_2$ simultaneously zero, which is completely understood from the following Lemma. 
\begin{lem}[Lemma 6 of \cite{Gran Gran Poon}]\label{lemma: conformally bismut ricci flat}
    Suppose that the Ricci form of the Bismut connection of a Hermitian metric $g_M$ is $\partial\overline{\partial}$-exact on a manifold $M$ of dimension greater than two. Then the metric $g_M$ is conformally a CYT structure. In other words, there exists a conformal change of $g_M$ such that the Ricci form of the induced Bismut connection vanishes.
\end{lem}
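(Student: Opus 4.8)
The plan is to deduce the statement directly from the conformal variation formula for the Bismut Ricci curvature, which is the $t=-1$ instance of \eqref{eq: conformal Ricci Gauduchon}. Writing $n:=\dim_\C M$ and setting $t=-1$ there, the coefficient $t-nt-1$ becomes $n-2$, so for every smooth real function $f$ one has
$$ (Ric^+(e^f g_M))^{1,1} = (Ric^+(g_M))^{1,1} + (n-2)\sqrt{-1}\,\partial\overline{\partial} f \;, $$
while the $(2,0)$ and $(0,2)$ components of $Ric^+$ are left unchanged by the conformal rescaling. The crucial structural feature is that the coefficient $n-2$ is nonzero precisely because the dimension is greater than two; this is exactly where the dimensional hypothesis is used, and it is consistent with the fact that the Bismut connection is the critical Gauduchon connection ($C_t=0$) in complex dimension two.

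First I would use the hypothesis to fix the type of $Ric^+(g_M)$. Since $Ric^+(g_M)$ is assumed $\partial\overline{\partial}$-exact, it lies in the image of $\partial\overline{\partial}$ acting on functions, and being a real form we may write $Ric^+(g_M)=\sqrt{-1}\,\partial\overline{\partial}h$ for some $h\in\mathcal{C}^\infty(M;\R)$. In particular $Ric^+(g_M)$ is of pure type $(1,1)$, so its $(2,0)$ and $(0,2)$ parts vanish at the outset; by the conformal invariance of those components recorded above, they remain zero after any conformal change.

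The remaining problem is then purely algebraic. I would choose the conformal factor $f:=-\tfrac{1}{n-2}\,h$, which is legitimate since $n-2\neq 0$, and substitute it into the variation formula to obtain
$$ (Ric^+(e^f g_M))^{1,1} = \sqrt{-1}\,\partial\overline{\partial}h + (n-2)\sqrt{-1}\,\partial\overline{\partial}\!\left(-\tfrac{1}{n-2}h\right) = 0 \;. $$
Thus the $(1,1)$ part is annihilated while the other components stay zero, so $Ric^+(e^f g_M)=0$ and $\widetilde{g}_M:=e^f g_M$ is the sought CYT metric in the conformal class of $g_M$.

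There is essentially no analytic obstacle here: in contrast with the Gauduchon--Yamabe equation of Section \ref{sec:cy}, no genuine PDE must be solved, because the defect $Ric^+(g_M)$ is already presented as a $\partial\overline{\partial}$-potential and the equation $(n-2)\sqrt{-1}\,\partial\overline{\partial}f=-Ric^+(g_M)$ can be integrated by inspection. The only point demanding care is the bookkeeping of bidegrees — verifying via Proposition \ref{prop: relation Bismu Chern Ricci} and \eqref{eq: conformal Ricci Gauduchon} that the $(2,0)$ and $(0,2)$ components vanish from the start (guaranteed by the $\partial\overline{\partial}$-exactness) and are untouched by the conformal change — so that killing the $(1,1)$ part indeed makes the full Bismut Ricci form vanish. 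The hypothesis that the dimension exceeds two is indispensable, as it is exactly the condition ensuring $n-2\neq0$.
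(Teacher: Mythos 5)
Your proof is correct and follows essentially the same route as the paper's: both set $t=-1$ in \eqref{eq: conformal Ricci Gauduchon} to obtain the coefficient $n-2$, observe that the $(2,0)$ and $(0,2)$ components of $Ric^+$ are conformally invariant and vanish by the $\partial\overline{\partial}$-exactness hypothesis, and then cancel the $(1,1)$ part by a conformal change. Your write-up merely makes explicit the choice $f=-\tfrac{1}{n-2}h$ for the potential $h$ with $Ric^+(g_M)=\sqrt{-1}\,\partial\overline{\partial}h$, which the paper leaves implicit.
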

\begin{proof}
    We recall the argument of the proof in \cite{Gran Gran Poon}. The result directly comes from the formula for the conformal change of the Ricci curvature form of the Gauduchon connections. Indeed, setting $t=-1$ in (\ref{eq: conformal Ricci Gauduchon}) we obtain
    \begin{equation*}
        \left(Ric^+(e^f\omega)\right)^{1,1}=\left(Ric^+(\omega)\right)^{1,1} +(n-2)dd^cf \;.
    \end{equation*}
    Then it is sufficient to notice that the $(2,0)$ and the $(0,2)$ components of the Bismut Ricci tensor are invariant for conformal changes and are zero by hypothesis. 
\end{proof}
Without loss of generality, we may consider a trivial $\S^1\times\S^1$-principal bundle over $(X,\omega_X)$ since the computations are the same. In this case, we have the following result.
\begin{prop}
    Given a compact Hermitian manifold $(X,\omega_X)$ such that the Bismut Ricci curvature is $dd^c$-exact, i.e. $Ric^+ (\omega_X)=\sqrt{-1}\partial\overline{\partial}f$, then $M:= \mathbb{S}^1 \times\mathbb{S}^1 \times X$, equipped with the induced complex structure, admits a CYT metric.
\end{prop}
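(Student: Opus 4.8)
The plan is to exploit the triviality of the bundle to kill the curvature terms and thereby reduce the whole question to solving $dd^c u = Ric^+(\omega_X)$ for a \emph{positive} function $u$, a task made possible by the hypothesis together with the compactness of $X$. Since $M=\mathbb{S}^1\times\mathbb{S}^1\times X$ is a \emph{trivial} toric bundle, I would take as connection one-forms $\theta_1,\theta_2$ the (closed) standard angular one-forms on the two circle factors, so that $d\theta_1=d\theta_2=0$. Then the curvature $(1,1)$-forms vanish, $\omega_1=\omega_2=0$, and in particular $tr_{\omega_X}\omega_i=0$; this is exactly the degenerate case singled out at the start of this subsection. For a positive function $u$ on $X$ (constant along the fibers) I would consider the Hermitian metric
$$ \omega=\pi^*\omega_X+u\,\theta_1\wedge\theta_2 \;, $$
which is a genuine positive $(1,1)$-form for the induced complex structure, since $\theta_1\wedge\theta_2$ restricts to the flat K\"ahler form on each torus fiber and there is no cross term with the horizontal part $\pi^*\omega_X$.

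Next I would compute its Bismut Ricci form. Substituting $\omega_1=\omega_2=0$ into \eqref{eq: dd^*omega const fib} makes the last two terms drop out, collapsing the right-hand side to $dd^*\widehat{\omega}=\pi^*dd^c u$; combined with \eqref{eq: zero} this yields
$$ Ric^+(\omega)=\pi^*\bigl(Ric^+(\omega_X)-dd^c u\bigr) \;. $$
Equivalently, this is Proposition \ref{prop: Bismut Ricci flat} with the constants $c_1=c_2=0$ forced by $tr_{\omega_X}\omega_i=0$. Hence the CYT condition $Ric^+(\omega)=0$ reduces to solving $dd^c u=Ric^+(\omega_X)$ with $u>0$. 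Note that the right-hand side is manifestly a $(1,1)$-form, so no separate check on the $(2,0)$ and $(0,2)$ components is needed—they vanish automatically once the displayed expression does.

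Finally, I would use the hypothesis, which provides $f\in\mathcal{C}^\infty(X;\R)$ with $Ric^+(\omega_X)=\sqrt{-1}\partial\overline{\partial}f=dd^c f$. Since $dd^c$ annihilates constants, I would set $u:=f+C$ and choose $C>-\min_X f$; this is legitimate precisely because $X$ is compact, so $f$ is bounded, and it ensures $u>0$ everywhere while leaving $dd^c u=dd^c f=Ric^+(\omega_X)$ unchanged. With this choice the formula above gives $Ric^+(\omega)=0$, so $\omega$ is the desired CYT metric. The only genuine obstacle is this positivity requirement on the coefficient—the potential $f$ need not itself be positive—but it is resolved for free by the compactness of $X$, all the substantive computation having already been done in \eqref{eq: zero} and \eqref{eq: dd^*omega const fib}.
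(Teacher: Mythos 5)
Your proposal is correct and follows essentially the same route as the paper: for the trivial bundle the connection forms are closed, so $\omega_1=\omega_2=0$ and \eqref{eq: dd^*omega const fib} collapses to $dd^*(u\,\theta_1\wedge\theta_2)=\pi^*(dd^c u)$, reducing the CYT condition via \eqref{eq: zero} to $dd^c u=Ric^+(\omega_X)$. Your explicit shift $u=f+C$ with $C>-\min_X f$ is precisely the paper's opening remark that $f$ may be assumed positive ``since it is defined up to constants on a compact manifold.''
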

\begin{proof}
    We can suppose that $f$ is positive since it is defined up to constants on a compact manifold. Thus we can define the metric on $M$ as usual: $ \omega = \pi^*(\omega_X) + f \theta_1 \wedge \theta_2$, where $\pi$ is the trivial projection of $M$ onto $X$ and $\theta_1,\theta_2$ are dual to the coordinates $t_1,t_2$ on the fibers.
    We hence have that 
    $$ Ric^+(\pi^*(\omega_X)+f \theta_1 \wedge \theta_2)= \pi^*(Ric^+(\omega_X)) - dd^*(f \theta_1 \wedge \theta_2) =0$$
    since from (\ref{eq: dd^*omega const fib}) we get
    \begin{align*}
        dd^* ( f \theta_1 \wedge \theta_2) &= \pi^*(dd^cf)\;.
    \end{align*}
\end{proof}
From this result, using the transformation law of the Bismut Ricci form under a conformal change (\ref{eq: conformal Ricci Gauduchon}), we directly get the following corollary.
\begin{cor}\label{cor: lots of CYT}
    Given a complex manifold $(X,J)$ with a CYT metric $g_X$, for any positive function $f>0$, we can construct a CYT structure on $\mathbb{S}^1 \times\mathbb{S}^1 \times X$ by taking the submersion metric $\omega=\pi^*(e^f\omega_X) + (n-2)f\,\theta_1\wedge\theta_2$.
\end{cor}

\subsection{Class $\mathcal{C}$ manifolds} 
In \cite{Podesta} the author defined Class $\mathcal{C}$ manifolds as homogeneous manifolds $M = G/L$, where $G = G_1 \times G_2$ are compact simply connected simple Lie groups $G_1, G_2$ and $L$ is a connected closed subgroup of $G$. 
There should also exist two irreducible compact Hermitian symmetric spaces $G_1/H_1, G_2/H_2$ so that the subgroups $H_i$ are of the form $H_i =\, <Z_i>\cdot\, L_i$ for $i = 1,2$ and $L = L_1 \times L_2$.
Therefore, we have the following setting
\begin{equation*}
    \begin{tikzcd}
         & & & G_1/H_1\\ 
        \S^1\times\S^1 \arrow[r,hook] & (G_1/L_1)\times(G_2/L_2) \arrow[r,"\phi_1\times\phi_2"] & (G_1/H_1)\times(G_2/H_2) \arrow[ur, "\pi_1"] \arrow[dr, swap, "\pi_2"] \\
         & & & G_2/H_2
    \end{tikzcd}
\end{equation*}
where the $\phi_i$ are the Tits fibrations given by
$$ \phi_i : G_i/L_i \rightarrow G_i/H_i :\, g\cdot L_i \mapsto g\cdot H_i  \;.$$
$M$ is then the product of two manifolds
$ M=(G_1/L_1)\times(G_2/L_2) $
where $G_1/L_1$ and $G_2/L_2$ are M-manifolds as defined in \cite{Podesta}, meaning that $L_1$ and $L_2$ are the semi-simple part of the centralizer of some torus, $H_i = C_{G_i} (<Z_i>)$. Then by Theorem C in \cite{Wang} the manifolds $G_i/H_i$ are also simply connected, hence they are generalized flag manifolds. 
In \cite{Matsushima}, it is proved that any generalized flag manifold can be endowed with an invariant K{\"a}hler–Einstein Fano metric which is unique (up to homothety) once we fix the invariant complex structure on it.
Moreover, the left-invariant complex structures on $M$ are all given by choosing left-invariant complex structures on the symmetric spaces $G_1/H_1, G_2/H_2$ and on the torus $\S^1\times\S^1$. Once we fix the complex structures on the symmetric spaces, the standard complex structure on $M$ is that given by choosing $I(Z_1)=Z_2$.
By exploiting the structure of class $\mathcal{C}$ manifolds we can explicitly construct submersion metrics on them which are CYT, namely, we prove the following theorem.
\begin{thm}\label{thm: CYT class C}
    Take a class $\mathcal{C}$ manifold as in \cite{Podesta}, that is a product $M=M_1\times M_2$ of M-manifolds whose fibers through the Tits fibrations $\phi_i$ over two generalized flag manifolds $X_1=G_1/H_1$ and $X_2=G_2/H_2$ with $\S^1$-fibers, and equip it with a standard complex structure. 
    Set $\omega_i$ the unique invariant K{\"a}hler--Einstein metrics on $X_i$ with Einstein constants $n_i=\dim(X_i)$. Then the metric on $M$ given by
    $$ \omega= \phi_1^*(\omega_1) + \phi_2^*(\omega_2) + \theta_1\wedge\theta_2 \;,$$
    defines a CYT structure on $M$. Here $\theta_1$ and $\theta_2$ are the connections one forms on the fiber bundles such that $d\theta_i=\phi^*_i\,\omega_i$, 
\end{thm}
\begin{proof}
    The metric on the base space $X=X_1\times X_2$ is $\omega_X = \omega_1 + \omega_2$. Then the metric $\omega$ satisfies
    \begin{equation*}
        \begin{cases}
            Ric^+(\omega_X) = dd^c(1) + n_1\omega_1 + n_2\omega_2\\
            tr_{\omega_X}\,\omega_i=n_i \quad \text{ for } i=1,2
        \end{cases}
    \end{equation*}
    hence it is Bismut Ricci flat by Proposition \ref{prop: Bismut Ricci flat}.
    Thus, we only need to check that the Tits fibrations represent the $U(1)$-principal bundles over $G_i/H_i$ with curvature $\omega_i \in c_1(G_i/H_i)$ chosen to be the unique K{\"a}hler--Einstein metrics on $X_i$. We know that the isomorphism classes of principal $U(1)$-bundles over a manifold $X$ are parametrized by its cohomology group $H^2(X)$; moreover, we can extract the following piece from the exact sequence in the cohomology of the Tits fibration: 
    $$ \R\cong H^1(\S^1)\xrightarrow{\delta} H^2(G/H)\xrightarrow{\phi^*}H^2(G/L) =0 \;,$$
    where the last term vanishes since the M-manifolds have zero second Betti number by Theorem D in \cite{Wang}. 
    Thus, on the $U(1)$-principal bundles on the $G_i/H_i$ given by the Tits fibrations we can always find connections one forms $\theta_i$ with curvature in $c_1(G_i/H_i)$.
\end{proof}

The existence of CYT Hermitian structures on the class $\mathcal{C}$ manifolds can also be derived by Theorem 3 in \cite{Gran}. 
Indeed, the metric $-B(\cdot,\cdot)$ given by the negative of the Killing form of $G$ is Hermitian with respect to the standard complex structures on $M$.  
To see this, consider the decomposition of the Lie algebra $\g$ of $G$ as
$$ \g=\g_1+\g_2 = (\m_1 + \l_1) + (\m_2 +\l_2) = (\n_1 + \R\,Z_1 + \l_1) + (\n_2 + \R\,Z_2 + \l_2) \;. $$
Here, $\g_1$ and $\g_2$ are the Lie algebras of $G_1$ and $G_2$ respectively, while the $\l_i$'s are the Lie algebras of the $L_i$'s; moreover, the Lie algebras $\h_i$'s of the $H_i$'s satisfy $\h_i = \l_i + \R\,Z_i$ for $i=1,2$.
The Killing form $B$ is Hermitian on $\n$, moreover, the tori $\left<Z_1\right>$ and $\left<Z_2\right>$ are orthogonal to the $\n_i$'s as well as one to each other. 
It only remains to verify that $B$ is Hermitian on $\t$, that is $B(Z_1,Z_1)=B(Z_2,Z_2)$.

The CYT metrics constructed above can be characterized as the unique CYT metrics among the homogeneous ones. 
Namely, we prove the following result.

\begin{thm}\label{thm: CYT uniqueness}
    Take a class $\mathcal{C}$ manifold $M$ as in Theorem \ref{thm: CYT class C}. Suppose that none of the $X_i$'s is $\mbox{SO}(k+2)/\mbox{SO}(2)\times\mbox{SO}(k)$ for $k\geq 3$, then the metric
    $$ \omega= \phi_1^*(\omega_1) + \phi_2^*(\omega_2) + \theta_1\wedge\theta_2 \;,$$
    constructed in Theorem \ref{thm: CYT class C}, is the unique (up to homothety) homogeneous CYT metric on $M$.
\end{thm}
\begin{proof}
    First of all, we verify that the homogeneous metrics on $M$ make $\phi_1\times\phi_2$ a Riemannian submersion. 
    Indeed, with the same notations as above, a $G$-invariant Hermitian metric $g'$ on $M$, can be seen as an $\ad\left(\l_1+\l_2\right)$-invariant Hermitian inner product on $\m_1+\m_2$. 
    As the $\l_i$'s are not trivial, $\l = \l_1+\l_2$ acts non-trivially on $\n=\n_1+\n_2$ and trivially on $\t$, therefore $g'(\t, \n) = 0$.
    Moreover, the $\ad(\l)$-modules $\n_i$ are mutually non-equivalent, hence $g'(\n_1, \n_2 ) = 0$. 
    Since we are avoiding the special case of $\g_i = \mathfrak{so}(n + 2)$ and $\h_i = \mathfrak{so}(2)+\mathfrak{so}(n)$, for $ n \geq 3$ the $\n_i$'s are $\l_i$-irreducible. Hence, the Schur Lemma implies that $g'$ on $\n_i\times\n_i$ restricts to a multiple ($\lambda_i\in\R_+$) of the Killing form on $G_i$, i.e.
    $$ g'_{|_{\n_i\times\n_i}} = -\lambda_i (B_i)_|\;.$$ 
    In other words, the homogeneous metrics on $M$ are all of the types
    $$ \omega' = \lambda_1\phi_1^*(\omega_1) + \lambda_2\phi_2^*(\omega_2) + \lambda \theta_1\wedge\theta_2 \;,$$
    and by Proposition \ref{prop: Bismut Ricci flat} any homogeneous CYT metric $g'$ have to satisfy
    \begin{equation*}
        \begin{cases}
            Ric^+(\lambda_1\omega_1 + \lambda_2\omega_2) = n_1\omega_1 + n_2\omega_2\\
            \lambda tr_{(\lambda_1\omega_1+\lambda_2\omega_2)}\,\omega_i=n_i \quad \text{ for } i=1,2
        \end{cases}
    \end{equation*}
    However, for $i=1,2$,
    $$ n_i = \lambda tr_{(\lambda_1\omega_1+\lambda_2\omega_2)}\,\omega_i=\frac{\lambda}{\lambda_i}tr_{(\omega_1+\omega_2)}\,\omega_i=\frac{\lambda}{\lambda_i}n_i \;,$$
    proving that $\lambda=\lambda_1=\lambda_2$, and hence $g'$ is a positive multiple of $g$.
\end{proof}

\medskip

Theorem \ref{thm: CYT class C} and Theorem \ref{thm: CYT uniqueness} apply to give unique homogeneous CYT metrics on the Calabi--Eckmann manifolds when they are equipped with their standard complex structures. 
Indeed, these are the total spaces $M_{n_1,n_2}\cong\mathbb{S}^{2n_1+1}\times\mathbb{S}^{2n_2+1}$ of rank one toric bundles over the product of complex projective spaces $\mathbb{C}\mathbb{P}^{n_1}\times\mathbb{C}\mathbb{P}^{n_2}$. 
As class $\mathcal{C}$ manifolds, they are given by taking $G_i=\mbox{SU}(n_i+1),\, L_i = \mbox{SU}(n_i),$ and $H_i = \mbox{SU}(n_i) \times \mbox{U}(1)$. 
The Tits fibrations agree with the Hopf fibrations $\mathbb{S}^1 \hookrightarrow  \mathbb{S}^{2n_i+1}\xrightarrow{\phi_i} \mathbb{C}\mathbb{P}^{n_i}$.
Then the standard Calabi--Eckmann structure $J$ on 
$$ TM_{n_1,n_2}= H_{\mathbb{C}\mathbb{P}^{n_1}}\oplus \left<Z_1, Z_2\right>\oplus H_{\mathbb{C}\mathbb{P}^{n_2}}  $$
is $J=J_{\mathbb{C}\mathbb{P}^{n_1}}\oplus I \oplus J_{\mathbb{C}\mathbb{P}^{n_2}}$ where $J_{\mathbb{C}\mathbb{P}^{n_1}}$ and $J_{\mathbb{C}\mathbb{P}^{n_2}}$ are the complex structures of $\mathbb{C}\mathbb{P}^{n_1}$ and $\mathbb{C}\mathbb{P}^{n_2}$ respectively pulled-back on the horizontal spaces $H_{\mathbb{C}\mathbb{P}^{n_1}}$ and $H_{\mathbb{C}\mathbb{P}^{n_2}}$, and $I\left(Z_1\right)=Z_2$.

\begin{cor}\label{cor: CYT on CE}
    Given a Calabi--Eckmann manifold $M_{n_1,n_2}$ equipped with its standard complex structure. 
    Consider the Fubini--Study metrics $\omega_i$'s on the complex projective spaces $\C\P^{n_i}$'s with Einstein constants $n_i$'s, and set $\theta_1$ and $\theta_2$ the connections one forms on the fiber bundles such that $d\theta_i=\phi^*_i\,\omega_i$ for $i=1,2$. 
    Then, the metric 
    $$ \omega= \phi_1^*(\omega_1) + \phi_2^*(\omega_2) + \theta_1\wedge\theta_2 \;,$$
    is a CYT metric on $M_{n_1,n_2}$; moreover, it is the unique homogeneous CYT metric on it.
\end{cor}

We remark that in \cite{me} a complete description of the Bismut curvature tensor for the homogeneous metrics on the Hopf manifolds was given. In particular, given a homogeneous metrics $g(\alpha,\beta)$ on an $n$-dimensional Hopf manifold, it holds (in the standard local holomorphic coordinates $\{z_i\}$),
$$ Ric^+ (g(\alpha,\beta)) = \left(2-n + 2\frac{\beta}{\alpha}(1-n)\right)\left(\frac{\delta_{ij}}{|z|^2}-\frac{\overline{z}_i z_j}{|z|^4}\right) \, dz^i\wedge d\bar z^j \;. $$
This identically vanishes if and only if the ratio $\frac{\beta}{\alpha}$ equals $\frac{2-n}{2n-2}$, giving explicit CYT metrics on any Hopf manifold $\S^1\times\S^{2n+1}$, unique among the homogeneous ones. 

\subsection{Bismut Hermitian Einstein}
The {\em pluriclosed flow} is a parabolic flow of metrics in the family of Hermitian curvature flows introduced by Streets and Tian in \cite{Streets Tian 4}. 
It has the property of preserving the pluriclosed condition, that is $\p\bar\p\omega=0$.
As a matter of fact, given a pluriclosed metric $\omega_0$, it evolves as
\[
\begin{cases}
    \frac{\p}{\p t}\omega = -\left(Ric^+(\omega)\right)^{1,1}\\
    \omega(0) = \omega_0
\end{cases}
\]
The static points of the pluriclosed flow are pluriclosed metrics $\omega$ which satisfy
\begin{equation}\label{1,1 Bismut Einstein}
    \left(Ric^+(\omega)\right)^{1,1} = \lambda \omega \, ,\quad\lambda\in\R\;,
\end{equation}
and are called in the literature {\em Bismut Hermitian Einstein metrics}. 
We distinguish the case of $\lambda=0$ from that of $\lambda\neq 0$. 
The only known examples of non-K{\"a}hler pluriclosed metrics which also satisfy $(Ric^+\omega)^{1,1}=0$ are that given by the {\em Bismut flat} structures. 
These are Hermitian structures such that the whole Bismut curvature tensor $R^+$ vanishes and Theorem 9 in \cite{Angella et al} ensures that their Hermitian metrics are pluriclosed.
Particular examples are given by the standard Calabi--Eckmann structures on the Hopf surface $\S^1\times\S^3$ and the Calabi--Eckmann threefold $\S^3\times\S^3$ which are known to be Bismut flat.
In \cite{Garcia Streets} the authors asked if the other Calabi--Eckmann manifolds admit such special Hermitian structures. 
The answer is negative and it comes from the fact that, for cohomological reasons, $\S^1\times\S^1$, $\S^1\times\S^3$ and $\S^3\times\S^3$ are the only Calabi--Eckmann manifolds which can admit a pluriclosed structure, see Example 5.17 in \cite{Cavalcanti}. 
On the other hand, when equipped with the standard Calabi--Eckmann complex structure, these manifolds can be equipped with metrics such that $(Ric^+\omega)^{1,1}$ vanishes. Hence we have the following picture:
\begin{itemize}
    \item $\mathbb{S}^1\times\mathbb{S}^1$ has a flat K{\"a}hler metric;
    \item $\mathbb{S}^3\times\mathbb{S}^1$ has a Bismut flat, hence pluriclosed, metric;
    \item $\mathbb{S}^3\times\mathbb{S}^3$ has a Bismut flat, hence pluriclosed, metric;
    \item $\S^{2n+1}\times\S^{2m+1}$ with $n\geq2$, $m\geq 0$ have Bismut Ricci flat metrics (by Corollary \ref{cor: CYT on CE}) which are not pluriclosed. 
\end{itemize}
On the other hand, when $\lambda\neq 0$ there are restrictions that suggest that the equation \eqref{1,1 Bismut Einstein} should imply that the Hermitian structure is K{\"a}hler (see \cite{Garcia Streets}, page 172). 
Some of these can be found in \cite{Streets solitons} (e.g. Proposition 3.5), where the author classifies solitons of the Pluriclosed Flow. 

\begin{prop*}[Proposition 3.5 of \cite{Streets solitons}]\label{soliton street}
    Let $(M^{2n},J)$ be a compact K{\"a}hler manifold, and suppose $(g, f)$ is a pluriclosed steady or shrinking soliton on $M$. Then $(g, f)$ is a K{\"a}hler-Ricci soliton.
\end{prop*}

Here we show that on $\S^1\times \S^1$-principal bundles over Hermitian manifolds there are no metrics satisfying \eqref{1,1 Bismut Einstein} with $\lambda\neq0$.
We take a slightly more general setting than that of the previous section, namely, we equip the total space $M$ of the rank one toric fibration over $(X,\omega_X)$ with a metric $ \omega = \pi^*(\omega_X) + f \theta_1 \wedge \theta_2 $ where $f$ is any positive function on $M$. Using the computation of \eqref{eq: dd^*omega} we prove the following result.
\begin{prop}
    Given a principal toric bundle, $\mathbb{S}^1 \times\mathbb{S}^1 \hookrightarrow M \xrightarrow{\pi} X$, over a Hermitian manifold $(X,\omega_X)$, with connection one forms $\theta_1,\theta_2$ such that $d\theta_i=\pi^*\omega_i$ for $(1,1)$-forms $\omega_i$, there are no Hermitian metric of type $\omega=\pi^*(\omega_X) + f\theta_1\wedge\theta_2$, where $f$ is a positive function on $M$, which satisfy the equation
    $$ \left(Ric^+(\omega)\right)^{1,1}=\lambda\omega $$
    for $\lambda\neq 0$.
\end{prop}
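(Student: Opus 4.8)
The plan is to isolate the purely vertical part of the equation and integrate it over $M$. Since $\omega$ is the fundamental form of a Hermitian metric and $\pi^*\omega_X$ is of type $(1,1)$, the form $\theta_1\wedge\theta_2 = (\omega-\pi^*\omega_X)/f$ is itself of type $(1,1)$; hence the coefficient of $\theta_1\wedge\theta_2$ in any $2$-form already belongs to its $(1,1)$-part, and extracting that coefficient from $(Ric^+(\omega))^{1,1}$ is the same as extracting it from $Ric^+(\omega)$. Using \eqref{eq: zero} together with the explicit expression \eqref{eq: dd^*omega} for $dd^*\widehat\omega$, I would first read off the $\theta_1\wedge\theta_2$-coefficient of both sides of $(Ric^+(\omega))^{1,1}=\lambda\omega$. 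The pullback $\pi^*(Ric^+(\omega_X))$ contributes nothing in this direction, and the base–base and mixed base–fibre terms of \eqref{eq: dd^*omega} contain no $\theta_1\wedge\theta_2$; thus only the last summand of \eqref{eq: dd^*omega} survives, giving
$$ -\bigl(t_1(f\,tr_{\omega_X}\omega_2) - t_2(f\,tr_{\omega_X}\omega_1)\bigr) = \lambda f\;. $$

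Next I would use that $tr_{\omega_X}\omega_1$ and $tr_{\omega_X}\omega_2$ are pulled back from $X$, hence constant along the fibres, so $t_i(tr_{\omega_X}\omega_j)=0$ and the identity reduces to
$$ -tr_{\omega_X}\omega_2\; t_1(f) + tr_{\omega_X}\omega_1\; t_2(f) = \lambda f\;. $$
Finally I would integrate this against the volume form. From $\omega = \pi^*\omega_X + f\,\theta_1\wedge\theta_2$ one computes $\omega^N = N\,f\,(\pi^*\omega_X)^{N-1}\wedge\theta_1\wedge\theta_2$ with $N=\dim_\C M$, so that $d\mu_\omega = c\,f\,\nu$ for the top form $\nu := (\pi^*\omega_X)^{N-1}\wedge\theta_1\wedge\theta_2$ and a positive constant $c$. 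Multiplying the reduced equation by $d\mu_\omega$ turns each left-hand term into $\tfrac12\,c\,tr_{\omega_X}\omega_j\, t_i(f^2)\,\nu$ up to sign; since $tr_{\omega_X}\omega_j$ is fibre-constant and $\mathcal{L}_{t_i}\nu=0$, this equals $\tfrac12\,c\,t_i\bigl(tr_{\omega_X}\omega_j\,f^2\bigr)\,\nu = \tfrac12\,c\,\mathcal{L}_{t_i}\bigl(tr_{\omega_X}\omega_j\,f^2\,\nu\bigr) = \tfrac12\,c\,d\,\iota_{t_i}\bigl(tr_{\omega_X}\omega_j\,f^2\,\nu\bigr)$, whose integral over the closed manifold $M$ vanishes by Stokes. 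Hence the whole left-hand side integrates to zero, while the right-hand side gives $\lambda\int_M f\,d\mu_\omega$, which is strictly positive in absolute value because $f>0$. This forces $\lambda=0$, contrary to the hypothesis $\lambda\neq 0$.

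The computation is essentially mechanical once the vertical component is singled out. The step I would treat as the crux is the vanishing of the fibre-direction integrals: this rests on the torus-invariance of the density $\nu$, i.e. $\mathcal{L}_{t_i}\nu=0$, which follows from $\mathcal{L}_{t_i}\pi^*\omega_X=0$ together with $\mathcal{L}_{t_i}\theta_j = \iota_{t_i}\pi^*\omega_j + d(\delta_{ij}) = 0$, and on the fact that the trace functions $tr_{\omega_X}\omega_j$ are pulled back from the base. These two facts are exactly what let the integration by parts in the vertical directions kill the left-hand side, turning the vertical component of \eqref{1,1 Bismut Einstein} into the sign obstruction $\lambda\int_M f\,d\mu_\omega=0$.
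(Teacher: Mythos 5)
Your reduction to the vertical equation $tr_{\omega_X}\,\omega_1\, t_2f - tr_{\omega_X}\,\omega_2\, t_1f = \lambda f$ coincides with the paper's: the author derives exactly this as the $\theta_1\wedge\theta_2$-component of the system obtained from \eqref{eq: zero} and \eqref{eq: dd^*omega}, and your preliminary observations are sound (the coefficient extraction is legitimate because $J$ preserves the horizontal/vertical splitting, so $\theta_1\wedge\theta_2$ is $J$-invariant of type $(1,1)$; the volume computation $\omega^N = N f\,(\pi^*\omega_X)^{N-1}\wedge\theta_1\wedge\theta_2$ is correct since $\omega_X^N=0$ on the base and $(\theta_1\wedge\theta_2)^2=0$; and $\mathcal{L}_{t_i}\nu=0$ follows as you say). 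From there, however, the two arguments genuinely diverge. The paper finishes pointwise: the fibres are compact tori and $t_1,t_2$ span their tangent spaces, so at a critical point of $f$ restricted to a single fibre both $t_1f$ and $t_2f$ vanish, forcing $\lambda f=0$ there and contradicting $f>0$, $\lambda\neq 0$. You finish globally, by an integration-by-parts/divergence argument: multiplying by $f\,\nu$, using fibre-constancy of $tr_{\omega_X}\,\omega_i$ and torus-invariance of $\nu$, the left side becomes exact and integrates to zero, leaving $\lambda\int_M f\,d\mu_\omega = 0$. Your route is correct and has the merit of requiring no pointwise sign analysis (it would, for instance, still detect the obstruction for weak solutions); the paper's critical-point argument is more elementary and strictly local.

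One caveat worth fixing: the proposition as stated does not assume $X$ (hence $M$) compact, and the paper's proof deliberately uses only compactness of one fibre, which holds automatically. Your Stokes argument, as written, needs $M$ to be a closed manifold. This is easily repaired inside your own framework: integrate the reduced equation over a single fibre $T^2$ against its invariant area form $\theta_1\wedge\theta_2$; since $tr_{\omega_X}\,\omega_i$ is constant along the fibre and $\int_{T^2} (t_i f)\,\theta_1\wedge\theta_2 = 0$ by the same Lie-derivative reasoning applied on the torus, you obtain $\lambda\int_{T^2} f = 0$ and the same contradiction, with no global hypotheses on $M$. With that one-line localization, your proof matches the generality of the paper's.
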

\begin{proof}
    Thanks to (\ref{eq: dd^*omega}) the Bismut Einstein problem (\ref{1,1 Bismut Einstein}) in this setting reduces to solve \begin{equation*}
        \begin{cases}
            \pi^*\left(Ric^+(\omega_X)\right)^{1,1} = \lambda\pi^*(\omega_X) + \left( e_i (J e_j)f - e_j (J e_i) f\right)e^i\wedge e^j + f(tr_{\omega_X}\omega_1\pi^*\omega_1 + tr_{\omega_X}\omega_2\pi^*\omega_2)\\
            tr_{\omega_X}\omega_1\,t_2f  - tr_{\omega_X}\omega_2\,t_1f = \lambda f\\
            \left(\left(  e_j (f \, tr_{\omega_X}\omega_i) - t_i(J e_j) f \right) \theta^i\wedge e^j\right)^{1,1} =0
        \end{cases}
    \end{equation*}
    In particular, $f$ has to verify
    \begin{equation*}\label{1,1 Bismut Einstein tris}
        tr_{\omega_X}\omega_1\,t_2f  - tr_{\omega_X}\omega_2\,t_1f = \lambda f\;.
    \end{equation*}
    Since the fibers are compact, if we fix one of them, there should be a critical point for $f$ on it. At this point, both $t_1 f$ and $t_2 f$ vanish giving a contradiction with the above equality, since $f>0$.
\end{proof}

\section*{Acknowledgements}
I would like to thank my advisor Daniele Angella, and Francesco Pediconi for many helpful suggestions and their constant support and encouragement. 
I am also grateful to professors Simone Calamai, David Petrecca, and Cristiano Spotti for useful clues and discussions. 
Many thanks also to the anonymous Referees for their useful comments and suggestions.



\begin{thebibliography}{10}

\bibitem{Angella et al}
D. Angella, A. Otal, L. Ugarte, R. Villacampa, On Gauduchon connections with K\"ahler-like curvature, {\em  To appear in Communications in Analysis and Geometry, arXiv:1809.02632} (2018).

\bibitem{Chern-Yamabe}
D. Angella, S. Calamai, C. Spotti, On the Chern–Yamabe problem, {\em Mathematical Research Letters} \textbf{24} (2017), no.~3, 645–677.

\bibitem{Angella Pediconi}
D. Angella, F. Pediconi, On cohomogeneity one Hermitian non-K{\'a}hler metrics, {\em arXiv:2010.08475} (2021).

\bibitem{me}
G. Barbaro, Griffiths positivity for Bismut curvature and its behaviour along Hermitian Curvature Flows, {\em Journal of Geometry and Physics}, 104323, DOI:https://doi.org/10.1016/j.geomphys.2021.104323.

\bibitem{Borel}
A. Borel, K{\"a}hlerian coset spaces of semi-simple Lie groups, {\em Proc. Nat. Acad. Sci.} \textbf{40} (1954), 1147–1151.

\bibitem{Cavalcanti}
G. R. Cavalcanti, Hodge theory of SKT manifolds, {\em Advances in Mathematics} \textbf{374} (2020), 107270.


\bibitem{Fino Vezzoni}
A. Fino, L. Vezzoni, Special Hermitian metrics on compact solvmanifolds, {\em Journal of Geometry and Physics} \textbf{91} (2015), 40-53.

\bibitem{Fu Yau}
J.-X. Fu and S.-T. Yau, A Monge-Ampére-type equation motivated by string theory, {\em Communications in Analysis and Geometry} \textbf{15} (2007), no.~1, 29-76. 

\bibitem{Fu Yau 2}
J.-X. Fu and S.-T. Yau, The theory of superstring with flux on non-K{\'a}hler manifolds and the complex Monge-Ampère equation, {\em Journal of Differential Geometry} \textbf{78} (2008), no.~3, 369-428.

\bibitem{Garcia Streets}
M. Garcia-Fernandez, J. Streets, Generalized Ricci Flow, {\em American Mathematical Society, Providence, RI}, University Lecture Series, \textbf{76} (2021).

\bibitem{gauduchon-cras1977}
P. Gauduchon, Le théorème de l'excentricité nulle, {\em C. R. Acad. Sci. Paris Sér. A-B} \textbf{285} (1977), no.~5, A387--A390.

\bibitem{gauduchon-mathann}
P. Gauduchon, La $1$-forme de torsion d'une variété hermitienne compacte, {\em Math. Ann.} \textbf{267} (1984), no.~4, 495--518.

\bibitem{Gran Gran Poon}
D. Grantcharov, G. Grantcharov, Y.S. Poon, Calabi--Yau connections with torsion on toric bundles, {\em Journal of Differential Geometry} \textbf{78} (2008), no.~1, 13–32.

\bibitem{Gran}
G. Grantcharov, Geometry of compact complex homogeneous spaces with vanishing first {C}hern class, {\em Advances in Mathematics} \textbf{226} (2011) no.~4, 3136--3159.

\bibitem{Hull}
C. M. Hull, Compactifications of the heterotic superstring, {\em Physics Letters. B.} \textbf{178} (1986), no.~4, 357-364.

\bibitem{Ivanov Papa}
S. Ivanov, G. Papadopoulos, Vanishing theorems and string backgrounds, {\em Classical Quantum Gravity} \textbf{18} (2001) no.~6, 1089–1110.

\bibitem{Mehdi0}
C. Koca, M. Lejmi, Hermitian metrics of constant Chern scalar curvature on ruled surfaces, {\em Kodai Mathematical Journal} \textbf{43} (2020) no.~3, 409-430.

\bibitem{Mehdi}
M. Lejmi, M. Upmeier, Integrability theorems and conformally constant Chern scalar curvature metrics in almost Hermitian geometry, {\em Communications in Analysis and Geometry} \textbf{28} (2020), no.~7, 1603-1645.

\bibitem{Liu Yang}
K. F. Liu, X. K. Yang, Geometry of Hermitian manifolds, {\em International Journal of Mathematics} \textbf{23} (2012), 1250055-40.

\bibitem{Liu Yang 2}
K. F. Liu, X. K. Yang, Ricci curvatures on Hermitian manifolds, {\em Transactions of the American Mathematical Society} \textbf{369} (2017) no.~7, 5157-5196.

\bibitem{Matsushima}
Y. Matsushima, Remarks on K{\"a}hler--Einstein manifolds, {\em Nagoya Mathematical Journal} \textbf{46} (1972), 161-173.

\bibitem{Podesta}
F. Podestà, Homogeneous Hermitian manifolds and special metrics, {\em Transformation Groups} \textbf{23} (2018), no.~4, 1129-1147.

\bibitem{Streets solitons}
J. Streets, Classification of solitons for pluriclosed flow on complex surfaces, {\em Mathematische Annalen} \textbf{375} (2019), no.~3-4, 1555–1595.

\bibitem{Streets}
J. Streets, Pluriclosed flow and the geometrization of complex surfaces, {\em Chen J., Lu P., Lu Z., Zhang Z. (eds) Geometric Analysis. Progress in Mathematics} \textbf{333} (2020)

\bibitem{Streets Tian 4}
J. Streets, G. Tian, A parabolic flow of pluriclosed metrics, {\em International Mathematics Research Notices. IMRN} \textbf{16} (2010), 3101-3133.

\bibitem{Streets Tian 3}
J. Streets, G. Tian, Hermitian curvature flow, {\em Journal of the European Mathematical Society (JEMS)} \textbf{13} (2011) no.~3, 601-634.

\bibitem{Streets Tian 2}
J. Streets, G. Tian, Generalized Kahler Geometry and the pluriclosed flow, {\em Nuclear Phys. B} \textbf{858}(2012) no.~2, 366-376.

\bibitem{Strominger}
A. Strominger, Superstrings with torsion, {\em Nuclear Physics. B.} \textbf{274} (1986), no.~2, 253-284.

\bibitem{Wang}
H.-C. Wang, Closed manifolds with homogeneous complex structure, {\em American Journal of Mathematics} \textbf{76} (1954), 1-32.

\bibitem{Wang Yang Zheng}
Q. Wang, B. Yang, F. Zheng, On Bismut flat manifolds, {\em Transactions of the American Mathematical Society} \textbf{373} (2020), no.~8, 5747-5772.

\bibitem{Yang Zheng}
B. Yang, F. Zheng, On compact Hermitian manifolds with flat Gauduchon, {\em Acta Mathematica Sinica (English Series)} \textbf{34} (2018), no.~8, 1259-1268.

\end{thebibliography}
\end{document}